\providecommand{\remove}[1]{}
\renewcommand{\Re}{\mathbb{R}}
\newcommand{\C}{\mathcal{C}}
\newcommand{\F}{\mathcal{F}}
\newtheorem{theorem}{Theorem}[section]
\newtheorem{lemma}[theorem]{Lemma}
\newtheorem{proposition}[theorem]{Proposition}
\newtheorem{claim}[theorem]{Claim}
\newtheorem{corollary}[theorem]{Corollary}
\newtheorem{definition}[theorem]{Definition}
\newtheorem{remark}[theorem]{Remark}
\newtheorem*{theorem*}{Theorem}
\newtheorem*{lemma*}{Lemma}
\newtheorem*{proposition*}{Proposition}
\newtheorem*{example*}{Example}
\begin{document}
\title{Conflict-Free Coloring of Intersection Graphs of Geometric Objects}

\author{Chaya Keller\thanks{Department of Mathematics, Ben-Gurion University of the NEGEV, Be'er-Sheva Israel. \texttt{kellerc@math.bgu.ac.il}. Research partially supported by Grant 635/16 from the Israel Science Foundation, the Shulamit Aloni Post-Doctoral Fellowship of the Israeli Ministry of Science and Technology, and by the Kreitman Foundation Post-Doctoral Fellowship.}
\and
Shakhar Smorodinsky\thanks{Department of Mathematics, Ben-Gurion University of the NEGEV, Be'er-Sheva Israel.
\texttt{shakhar@math.bgu.ac.il}. Research partially supported by Grant 635/16 from the Israel Science Foundation.}
}

\date{}
\maketitle

\begin{abstract}
In FOCS'2002, Even et al. introduced and studied the notion of conflict-free colorings of geometrically defined hypergraphs.  They motivated it by frequency assignment problems in cellular networks. This notion has been extensively studied since then.

A conflict-free coloring of a graph is a coloring of its vertices such that the neighborhood (pointed or closed) of each vertex contains a vertex whose color differs from the colors of all other vertices in that neighborhood.
In this paper we study conflict-colorings of intersection graphs of geometric objects. We show that any intersection graph of $n$ pseudo-discs in the plane admits a conflict-free coloring with $O(\log n)$ colors, with respect to both closed and pointed neighborhoods. We also show that the latter bound is asymptotically sharp.
Using our methods, we also obtain a strengthening of the two main results of Even et al. which we believe is of independent interest.
In particular, in view of the original motivation to study such colorings, this strengthening suggests further applications to frequency assignment in wireless networks.

Finally, we present bounds on the number of colors needed for conflict-free colorings of other classes of intersection graphs, including intersection graphs of axis-parallel rectangles and of $\rho$-fat objects in the plane.
\end{abstract}

\section{Introduction}
\label{sec:introduction}

\subsection{Background}
A {\em conflict-free coloring} ({\em CF-coloring} in short) of a hypergraph $H$ is a coloring of its vertices in which each hyperedge contains a uniquely colored vertex (i.e., a vertex whose color differs from the colors of all other vertices that belong to the hyperedge).

Conflict-free colorings of hypergraphs were introduced in FOCS'2002 by Even et al.~\cite{ELRS}, in the context of the following problem of frequency assignment in cellular networks.
Suppose we need to assign radio frequencies to a given set of {\em base stations}  (i.e., antennas) in a cellular network.
Assume that the location and the transmission radius of each base station in the network is known and fixed, and modeled as a disc in the plane. Any client continuously scans frequencies in search of a base station with a good reception. If a client is within the reception range of two base stations which have the same frequency then mutual interference occurs. The goal is to allocate the minimum number of frequencies to the base stations such that any client (modeled as a point in the plane) will be able to connect to the network without interference (i.e., that among the antennas whose signals the client receives there will always be an antenna whose frequency differs from all other antennas).

The problem corresponds to conflict-free coloring of the hypergraph whose
vertex set is $B=\{D_1,D_2,\ldots,D_n\}$ -- the set of discs that correspond to the base stations, and whose hyperedges are all sets of the
form $E_x =\{D_i \in B: x \in D_i\}$, for all $x \in \Re^2$. The first main result proved in~\cite{ELRS} asserts that this hypergraph admits a CF-coloring with $\Theta(\log n)$ colors in the worst case:
\begin{theorem}[\cite{ELRS}]\label{thm:ELRS1}
Any set $S=\{D_1,D_2,\ldots,D_n\}$ of $n$ discs in the plane can be colored with $O(\log n)$ colors such that for any $x \in \Re^2$, the set $E_x =\{D_i \in S: x \in D_i\}$ contains a uniquely-colored element. This bound is asymptotically tight.
\end{theorem}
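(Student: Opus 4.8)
The plan is to prove the two assertions of Theorem~\ref{thm:ELRS1} separately: the $O(\log n)$ upper bound by an iterative ``extract one colour class and recurse'' scheme driven by the geometry of disc arrangements, and the matching $\Omega(\log n)$ lower bound by realizing the one-dimensional interval hypergraph with discs.

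For the upper bound I would colour in rounds. Given the set $R\subseteq S$ of discs still uncoloured at the current round, I extract a subset $T\subseteq R$ with $\cardin{T}\ge \alpha\cardin{R}$ for an absolute constant $\alpha>0$, assign to every disc of $T$ one fresh colour, delete $T$, and recurse on $R\setminus T$ with the remaining colours. Since $\cardin{R}$ drops by a constant factor each round, the process stops after $O(\log n)$ rounds, so $O(\log n)$ colours are used. The whole point is to choose each $T$ so as to maintain the property~(P): \emph{every point covered by at least two discs of $T$ is also covered by some disc of $R\setminus T$}; equivalently, $T$ contains no coverage set $E_x$ with $\cardin{E_x}\ge 2$. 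Granting~(P) in every round, the coloring is conflict-free. Indeed, fix $x$ with $E_x\neq\emptyset$ and let $r^\ast$ be the first round in which \emph{exactly one} disc of the extracted set covers $x$. Such a round exists: if at the last round that deletes an $x$-disc two or more extracted discs covered $x$, then~(P) would leave an $x$-covering disc alive for a later round, contradicting maximality. At round $r^\ast$ the single extracted disc over $x$ receives a colour that no other disc of $E_x$ can carry (every other disc of $E_x$ is deleted in a different round and hence gets a different colour), so it is the uniquely coloured element of $E_x$.

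The heart of the argument, and the step I expect to be the main obstacle, is the \emph{extraction lemma}: that a subset $T$ of linear size satisfying~(P) always exists. This is exactly where the geometry of discs is indispensable, through the fact that the union of any subfamily of discs --- indeed of any $m$ pseudo-discs (Kedem, Livne, Pach and Sharir) --- has boundary complexity $O(m)$. By a Clarkson--Shor argument this linear union complexity bounds the number of faces of the arrangement at any fixed depth $k$ by $O(nk)$. I would then choose $T$ by including each disc independently with a small constant probability $p$: the expected number of coverage sets of size $\ge 2$ lying entirely inside $T$ is $\sum_{k\ge 2} O(nk)\,p^{k}=O(np^{2})$, the series being dominated by the shallow levels, and deleting one disc per violated set destroys all of them while leaving $\Omega(n)$ discs. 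The genuinely delicate point, which is forbidden by the union-complexity bound but would wreck a naive approach, is that one \emph{cannot} simply take an independent set in the ``two discs share a private point'' (Delaunay) graph: a point of high depth all of whose covering discs land in $T$ is not resolved by the round's colour, so what is really needed is control on \emph{all} shallow coverage sets at once, not merely the pairwise ones. Converting the level-complexity bound into a clean statement that simultaneously yields the constant fraction $\alpha$ and property~(P) is the crux; the geometric-series bookkeeping that then produces the $O(\log n)$ colours is routine.

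For the lower bound I would force $\Omega(\log n)$ colours by embedding the interval hypergraph on $\{1,\dots,n\}$, whose ranges are all sets $\{i,i{+}1,\dots,j\}$. A conflict-free colouring of this hypergraph needs $\Omega(\log n)$ colours: the uniquely coloured element of the full range $\{1,\dots,n\}$ has a colour occurring nowhere else and splits the ground set into two parts, the larger of which must again be conflict-free coloured without reusing that colour, giving $f(n)\ge 1+f(\lceil (n-1)/2\rceil)$ and hence $f(n)=\Omega(\log n)$. To realize this hypergraph by discs, take $n$ discs with very large radii and centres far below, tangent to the parabola $v=u^2$ at the points $(i,i^2)$, so that within any fixed bounded region each $D_i$ agrees with the lower half-plane $\{(u,v):v\le 2iu-i^2\}$. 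A point $(u_0,v_0)$ just below the parabola then lies in $D_i$ precisely when $\lvert i-u_0\rvert\le\sqrt{u_0^2-v_0}$, so its set of covering discs is an arbitrary index interval; choosing centre parameter $u_0=(i+j)/2$ and $\sqrt{u_0^2-v_0}$ slightly above $(j-i)/2$ realizes exactly $\{D_i,\dots,D_j\}$. Hence any conflict-free colouring of the discs restricts to one of the interval hypergraph, the $\Omega(\log n)$ bound transfers, and it matches the upper bound, giving the asymptotic tightness asserted in Theorem~\ref{thm:ELRS1}.
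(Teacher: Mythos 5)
Your proposal is correct, but it takes a genuinely different route from the paper's. The paper obtains Theorem~\ref{thm:ELRS1} as the radius-zero special case of Theorem~\ref{Thm:Antennas}: it forms the hypergraph on $\F$ with hyperedges $e_b$, proves via the Hanani--Tutte theorem and a perpendicular-bisector argument on disc centers that the auxiliary Delaunay-type graph $G_{I'}$ of \emph{every} induced sub-hypergraph is planar, concludes that every induced sub-hypergraph is properly $6$-colorable, and feeds this into the black-box framework of Theorem~\ref{Lemma:Proper-to-CF}. You unroll that framework by hand -- your property~(P) is exactly the statement that the extracted set is a color class of a proper coloring of the residual hypergraph, and your ``round with exactly one extracted disc over $x$'' argument is the standard correctness proof -- but you replace the planarity step by linear union complexity of discs (Kedem--Livne--Pach--Sharir), a Clarkson--Shor bound of $O(nk)$ on the depth-$k$ faces, and a random-sample-and-prune extraction. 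That is essentially the machinery of the original Even et al.\ proof and of the paper's Appendix~\ref{sec:alternative} (Theorem~\ref{thm:linear_comp}), and your observation that an independent set in the depth-$2$ Delaunay graph alone does not control deep coverage sets in this covering hypergraph is correct and is precisely why the union-complexity detour is needed. The trade-off: your route generalizes immediately to any family with linear union complexity, while the paper's planarity route yields an explicit small constant and, more importantly, extends to query ranges that are themselves discs (Theorem~\ref{Thm:Antennas}) and to list colorings (Theorem~\ref{Thm:List}), which a level-counting argument tied to point ranges does not give. Your lower bound is also sound: lifting the interval hypergraph on $\{1,\dots,n\}$ to discs via tangent half-planes to a parabola and running the halving recursion $f(n)\ge 1+f(\lceil (n-1)/2\rceil)$ is a legitimate alternative to the unit-disc construction the paper cites from~\cite{ELRS}. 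Two small points you should make explicit in the extraction lemma: property~(P) is monotone under shrinking $T$, so pruning one disc per violated coverage set cannot create new violations; and the single linearity-of-expectation bound on $|T|$ minus the number of violations (rather than two separate expectation bounds) is what guarantees one outcome achieving both a linear-size $T$ and property~(P). Both are easy, but they are load-bearing.
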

Another main result of~\cite{ELRS} is the following `dual' theorem:
\begin{theorem}[\cite{ELRS}]\label{thm:ELRS2}
Any set $S=\{x_1,x_2,\ldots,x_n\}$ of $n$ points in the plane can be colored with $O(\log n)$ colors such that for any disc $D$ in the plane, the set $E_D =\{x_i \in S: x \in D\}$ contains a uniquely-colored vertex. This bound is asymptotically tight.
\end{theorem}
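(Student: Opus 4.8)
The plan is to prove both directions. For the upper bound I will construct a coloring of a particularly strong type --- a \emph{unique-maximum} coloring, in which the colors are linearly ordered and, for every disc $D$, the largest color occurring among the points of $S\cap D$ occurs there exactly once; such a coloring is in particular conflict-free. I will produce it by an iterative ``peeling'' procedure: set $P_1:=S$, and at stage $i$ choose a subset $I_i\subseteq P_i$, assign color $i$ to all of $I_i$, remove it, and continue with $P_{i+1}:=P_i\setminus I_i$ until no points remain. Each point's color is thus its stage index, so later stages receive larger colors. Two things are needed: (a) that $I_i$ can always be chosen to be a constant fraction of $P_i$, so that the process terminates after $O(\log n)$ stages and uses $O(\log n)$ colors; and (b) a structural property of $I_i$ that forces the unique-maximum condition.

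The right choice is to let $I_i$ be an independent set in the \emph{Delaunay graph} $\mathrm{DT}(P_i)$ of the current point set. Since a Delaunay graph is planar, a proper $4$-coloring of it (the Four Color Theorem, or merely the sparsity of planar graphs) has a color class of size at least $|P_i|/4$; taking $I_i$ to be such a class gives $|I_i|\ge |P_i|/4$, so the number of stages is at most $\log_{4/3}n=O(\log n)$. This settles (a).

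For (b) the crux, which I expect to be the main obstacle, is the following geometric fact about Delaunay triangulations, which I will prove by a shrinking-disc argument: \textbf{if $I$ is an independent set of $\mathrm{DT}(Q)$ and $D$ is a disc with $D\cap Q\subseteq I$, then $|D\cap Q|\le 1$}. Indeed, if $D$ contained two points of $I$, I would shrink $D$ continuously, keeping two points of $Q$ in its closure, until reaching a disc whose interior is free of $Q$ but whose boundary carries two points $a,b$ of $Q$. Every point met during the shrinking lay inside the original $D$, so $a,b\in D\cap Q\subseteq I$; but $a,b$ lie on the boundary of a $Q$-empty disc, whence $ab$ is a Delaunay edge of $Q$, contradicting the independence of $I$. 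Granting this fact, conflict-freeness follows easily. Fix a disc $D$ with $S\cap D\neq\emptyset$ and let $i^{\ast}$ be the largest color occurring in $S\cap D$. Since no point of $S\cap D$ is colored above $i^{\ast}$, we have $D\cap P_{i^{\ast}+1}=\emptyset$, i.e. every point of $D\cap P_{i^{\ast}}$ was colored $i^{\ast}$, so $D\cap P_{i^{\ast}}\subseteq I_{i^{\ast}}$. Applying the fact with $Q=P_{i^{\ast}}$ gives $|D\cap P_{i^{\ast}}|\le 1$, and since color $i^{\ast}$ does appear in $D$ this is an equality: the maximum color in $S\cap D$ is attained by exactly one point. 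Thus the coloring is unique-maximum and, in particular, conflict-free, using $O(\log n)$ colors.

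For the matching lower bound I will reduce to the one-dimensional case. Placing the $n$ points of $S$ on a line $\ell$, every set of consecutive points is exactly $D\cap S$ for a suitable disc $D$ (a disc meets $\ell$ in a segment, and every segment is realizable). Hence any CF-coloring of $S$ with respect to discs restricts to a CF-coloring of collinear points with respect to intervals. It is a standard fact that CF-coloring $n$ collinear points with respect to intervals requires at least $\lfloor\log_2 n\rfloor+1$ colors: the interval spanning all points needs a uniquely colored point, after removing which one recurses on the two sides. This gives the $\Omega(\log n)$ bound and shows the $O(\log n)$ upper bound is asymptotically tight.
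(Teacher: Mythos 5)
Your proposal is correct, and it is essentially the argument the paper relies on: the paper does not reprove this cited result but derives it as the radius-zero special case of Theorem~\ref{Thm:Antennas}, whose proof consists of exactly your two ingredients --- a hereditary constant proper coloring coming from planarity of the Delaunay-type auxiliary graph, and the iterative ``peel off a large independent set'' procedure, which is precisely the content of the black-box Theorem~\ref{Lemma:Proper-to-CF} that you have simply inlined (yielding the same unique-maximum coloring). Your lower bound via collinear points and intervals is likewise the standard argument from the cited sources.
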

In fact, the assertion of Theorem~\ref{thm:ELRS2} holds in the more general setting of scaled translates of any fixed convex body instead of discs. The tightness assertion also holds in a stronger form (specifically, for \emph{any} set of $n$ points), as shown by Pach and T\'{o}th~\cite{cf1}.

The work of Even et al. was the starting point of an entire field of research in computational geometry. Dozens of papers studied conflict-free colorings of graphs and hypergraphs, mostly arising from geometric objects. See the survey \cite{CF-survey} and the references therein for more on CF-colorings.

For a simple undirected graph $G=(V,E)$ and a vertex $v \in V$, the \emph{pointed neighborhood} of $v$ in $G$
is $N_G(v)=\{u \in V: (v,u) \in E\}$.
The \emph{closed neighborhood} of $v$ is $N_G[v] = N_G(v) \cup \{v\}$. A coloring of the vertices of $G$ is
called \emph{conflict-free} with respect to closed (respectively, pointed) neighborhoods,
or in short, \emph{closed CF-coloring} (respectively, \emph{pointed CF-coloring}) if for any $v \in G$,
the set $N_G[v]$ (respectively, $N_G(v)$) contains a unique color (namely,
if there exists $u$ in the closed (respectively, pointed) neighborhood of $v$ such
that the color of $u$ differs from the color of all other vertices in the neighborhood).
The minimum number of colors in a CF-coloring of $G$ with respect to closed (respectively, pointed) neighborhoods
is denoted by $\chi_{CF}^{cn}(G)$ (respectively, $\chi_{CF}^{pn}(G)$).
Note that a closed (respectively, pointed) CF-coloring of a graph $G=(V,E)$ is
a CF-coloring of the hypergraph on the same vertex set whose hyperedges are all sets of the form $N_G[v]$
(respectively, $N_G(v)$), for all $v \in V$.
In the case of pointed/closed CF-coloring of \emph{graphs}, several papers considered the abstract setting (i.e., not assuming any geometric structure on the graph). In particular, tight upper bounds on $\chi_{CF}^{pn}(G)$ and $\chi_{CF}^{cn}(G)$ for general graphs are known, and they suggest that generally, the problem of pointed CF-coloring is harder than the problem of closed CF-coloring.

On the one hand, for any graph $G$ we have $\chi_{CF}^{cn}(G) \leq 2\chi_{CF}^{pn}(G)$ (see~\cite{CFPT09}). Indeed, any pointed CF-coloring is already `almost' a closed CF-coloring -- the only vertices that may not have a unique-colored neighbor are those whose degree in the induced subgraph created by their color class is 1. These vertices can be easily taken care of by recoloring each color class with two colors such that the color of each leaf will differ from the color of its only neighbor. Hence, at most twice as many colors are sufficient for a closed CF-coloring.

On the other hand, there exist graphs for which $\chi_{CF}^{cn}(G)$ is constant, while $\chi_{CF}^{pn}(G)$ is arbitrarily large. For example, let $G$ be the $n$-vertex graph obtained by taking a 1-subdivision of the complete graph $K_i$ (i.e., replacing each edge $(v,w)$ with the two edges $(v,u),(u,w)$ where $u$ is a new vertex). As $G$ is bipartite, we have $\chi_{CF}^{cn}(G)=2$. (Indeed, any proper coloring of a graph is a closed CF-coloring, as any vertex itself is a unique-colored vertex in its neighborhood.) On the other hand, any pair of vertices of the original graph composes a pointed neighborhood of a new vertex, and thus any pointed CF-coloring requires at least $i$ colors. Since the total number of vertices is $n=i+\binom{i}{2}$, we have $\chi_{CF}^{pn}(G)=\Omega(\sqrt{n})$.

Cheilaris~\cite{chei09} showed that for any $n$-vertex graph $G$, we have $\chi_{CF}^{pn}(G) \leq 2\sqrt{n}$. Pach and Tardos~\cite{CFPT09} noted that the constant can be improved to $\sqrt{2}$, and in fact, one can show that any hypergraph with less than $\binom{m}{2}$ hyperedges can be CF-colored with less than $m$ colors, which is almost tight in view of the above example. The upper bound for closed CF-coloring is much smaller: Pach and Tardos~\cite{CFPT09} showed that for any $G$, $\chi_{CF}^{cn}(G) = O(\log^2 n)$, and Glebov, Sz\'{a}bo, and Tardos~\cite{GST14} showed that this bound is asymptotically tight. A recent result of Abel et al.~\cite{AADFGHKS17} shows that a slightly modified variant of the classical Hadwiger conjecture holds for closed CF-colorings: any graph $G$ that does not contain $K_{r+1}$ as a minor admits a `closed CF-coloring' with $r$ colors.\footnote{In fact, in~\cite{AADFGHKS17} it is allowed to leave vertices uncolored; in our model, an additional color may be required for all uncolored vertices, and thus, in our notation the bound is $\chi_{CF}^{cn}(G) \leq r+1$.}

Another recent result, of Fox, Pach and Suk~\cite{FPS17}, exhibits a strong relation between properties of the hypergraph $H$ whose hyperedges are pointed neighborhoods of vertices of an $n$-vertex graph $G$ (which is our main object of study) to properties of the graph $G$. Specifically, it is shown in~\cite{FPS17} that if the VC-dimension of $H$ is bounded (which is the case in intersection graphs of various geometric objects) (see, e.g., \cite{MATOUSEK}) then $G$ admits a stronger Ramsey theorem which assures existence of a clique or an independent set of size $r$ with $r=\exp(\log^{1-o(1)}(n))$ (instead of $r \approx \frac{1}{2}\log n$ assured by the classical Ramsey theorem).

\subsection{Results}

In this paper we study pointed and closed CF-colorings of graphs from a geometric viewpoint. We consider CF-coloring of intersection graphs of geometric objects in the plane, such as pseudo-discs and axis-parallel rectangles. The vertices of such a graph are the geometric objects, and two vertices are connected by an edge if the intersection of the corresponding objects is non-empty. We show that in this setting, the bounds on the CF-chromatic number are much smaller than the (tight) general upper bounds mentioned above.

Our main result concerns intersection graphs of families of pseudo-discs in the plane -- i.e., families of simple closed Jordan regions such that the boundaries of any two regions intersect in at most two points.
\begin{theorem}\label{Thm:Pseudo-discs}
Let $\F$ be a family of $n$ pseudo-discs in the plane and let $G$ be its intersection graph. Then $\chi_{CF}^{pn}(G) = O(\log n)$, and this bound is asymptotically tight.
\end{theorem}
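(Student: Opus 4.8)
The plan is to reduce the problem to a purely combinatorial coloring property of the \emph{pointed-neighborhood hypergraph} $H=H(G)$, whose vertices are $V$ and whose hyperedges are the sets $N_G(v)$ with $\deg(v)\ge 2$, and then to establish that property geometrically for pseudo-discs. Recall that a coloring of a hypergraph is \emph{proper} if no hyperedge is monochromatic. First I would prove a general reduction: if every induced sub-hypergraph of $H$ admits a proper coloring with at most $k$ colors, then $\chi_{CF}^{pn}(G)=O(k\log n)$. The mechanism is a unique-maximum recursion. Properly $k$-color $H$; the largest color class $S$ contains no hyperedge of size $\ge 2$ and satisfies $|S|\ge |V|/k$. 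Assign all of $S$ the smallest color, delete it, and recurse on the induced sub-hypergraph $(V\setminus S,\{e\setminus S:e\in H\})$ using strictly larger colors, maintaining the invariant that every hyperedge has a unique maximum color. Indeed, any hyperedge $e$ with $e\setminus S\neq\emptyset$ inherits a unique maximum from the recursion, and that maximum exceeds the color of $S$, hence is the unique maximum—and therefore a uniquely colored vertex—of all of $e$; while no hyperedge of size $\ge 2$ can be swallowed by $S$. Since each step discards a $1/k$ fraction, the recursion depth, and thus the total number of colors, is $O(k\log n)$. It therefore remains to show that for pseudo-discs $H$ is hereditarily properly $O(1)$-colorable.

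This is the geometric heart of the proof: I must color any subfamily with a constant number of colors so that no pseudo-disc $D$ has all of its (at least two) neighbors monochromatic. My plan is to exploit the linear union complexity of pseudo-discs (Kedem--Livne--Pach--Sharir) and the resulting planarity of an associated Delaunay-type graph $DG(\F')$ on each subfamily $\F'$, where two pseudo-discs are adjacent whenever some point is covered by exactly those two. Planarity gives constant degeneracy, hence an ordering of $\F'$ in which every pseudo-disc has $O(1)$ earlier Delaunay-neighbors, which I would use to apply a constant palette greedily. \textbf{The main obstacle} is that the intersection graph $G$ is generally dense—a common point yields a clique—so a pseudo-disc may have arbitrarily many neighbors, and a sparse Delaunay skeleton does not by itself prevent a neighborhood from being monochromatic. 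To overcome this I would show that for every pseudo-disc $D$ one can always exhibit \emph{two} neighbors that the coloring forces to differ, by locating two neighbors that are extreme with respect to $D$ (for instance two pseudo-discs whose boundaries cross $\partial D$ on opposite sides, or a containment-minimal together with a containment-maximal neighbor) and that are adjacent, or provably close, in the planar Delaunay graph, so that separating such pairs along the degeneracy order handles all pseudo-discs at once. Making this ``two forced-different neighbors'' statement precise and uniform—disentangling crossing from containment intersections, and reconciling the dense graph with its planar skeleton—is where the pseudo-disc hypothesis is essential and is the technical core; it is precisely the property that fails for the $1$-subdivision of $K_t$, which is not realizable by pseudo-discs and whose neighborhood hypergraph needs $\Omega(\sqrt{n})$ proper colors.

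For tightness I would construct an explicit family of $n$ discs whose intersection graph forces $\Omega(\log n)$ colors, mirroring the classical lower bound behind Theorem~\ref{thm:ELRS1}. Concretely, I would realize as a disc intersection graph a nested, interval-like configuration whose pointed neighborhoods contain the canonical points-versus-intervals instance known to require $\Omega(\log n)$ conflict-free colors; since any pointed CF-coloring of $G$ restricts to a conflict-free coloring of that embedded sub-hypergraph, the lower bound is inherited. This direction is routine compared with the upper bound.
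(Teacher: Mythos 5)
Your overall framework---reduce to hereditary proper $O(1)$-colorability of a neighborhood hypergraph and then apply a unique-maximum recursion---is exactly the paper's Theorem~\ref{Lemma:Proper-to-CF}, and your lower-bound sketch matches Appendix~\ref{App:Tightness} in spirit. The gap is in the geometric heart, which you yourself flag as unresolved: you never prove that the pointed-neighborhood hypergraph of $G$ is hereditarily properly $O(1)$-colorable, and your proposed mechanism (``two extreme neighbors that are adjacent, or provably close, in the planar Delaunay graph'') is not an argument. A proper coloring of a sparse planar skeleton says nothing about hyperedges containing no skeleton edge, and a pseudo-disc's pointed neighborhood can easily be such a hyperedge; moreover the hyperedge $N_G(v)$ excludes $v$ itself, which is precisely what makes the pointed version harder than the closed one and what the union-complexity/Delaunay machinery does not address by itself.

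The paper sidesteps this entire difficulty with a decomposition you are missing: fix a maximum independent set $B$ of pairwise disjoint pseudo-discs and CF-color, with \emph{disjoint palettes}, two cross hypergraphs --- one on vertex set $B$ with hyperedges $N_G(d)\cap B$ for $d\notin B$, and one on vertex set $V\setminus B$ with hyperedges $N_G(b)\cap(V\setminus B)$ for $b\in B$. Maximality of $B$ guarantees every $d\notin B$ meets $B$, and the disjointness of the palettes upgrades a unique color in the restricted hyperedge to a unique color in the full pointed neighborhood. Each side is then shown hereditarily $O(1)$-properly-colorable by arguments that exploit this structure: on the $B$ side, Pinchasi's shrinking lemma is used to add dummy pseudo-discs making the hypergraph rank $2$, after which planarity of the Delaunay graph (Lemma~\ref{Lemma:Rom2}) really does suffice; on the $V\setminus B$ side, an auxiliary graph is proved planar via Hanani--Tutte (perpendicular bisectors for discs; Lemma~\ref{Lemma:BP} together with a depth-$1$ pruning step for pseudo-discs), and the coloring is a greedy coloring along the degeneracy order of a \emph{sequence} of auxiliary graphs, not a single proper coloring of one skeleton, which is what kills all monochromatic hyperedges including those of size at least $3$. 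These are exactly the devices needed to overcome the obstacle you correctly identify; without them, or a genuine proof of your stronger claim about the full neighborhood hypergraph (which the paper does not establish and which does not follow from linear union complexity alone), the argument does not go through.
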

Note that by the argument given above, Theorem~\ref{Thm:Pseudo-discs} also implies that $\chi_{CF}^{cn}(G) = O(\log n)$. However, we do not know whether this bound is tight.

Using the techniques we develop in the course of the proof of Theorem~\ref{Thm:Pseudo-discs}, we provide a strong generalization of the first major results on CF-coloring -- Theorems~\ref{thm:ELRS1} and~\ref{thm:ELRS2} presented above.
\begin{theorem}\label{Thm:Antennas}
Any family $\F$ of $n$ discs in the plane can be colored with $O(\log n)$ colors in such a way that for any disc $B$ (not necessarily from $\F$), the set $S_B = \{D \in \F: D \cap B \neq \emptyset\}$ contains a uniquely-colored element. The bound $O(\log n)$ is asymptotically tight.
\end{theorem}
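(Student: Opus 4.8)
The plan is to produce the coloring via the standard iterative CF-coloring framework, reducing the whole statement to a purely combinatorial sparsity estimate. Recall the framework (the same engine behind Theorem~\ref{Thm:Pseudo-discs}): if $H$ is a hypergraph on $n$ vertices such that every induced sub-hypergraph can be colored with at most $k$ colors so that no hyperedge of size $\ge 2$ is monochromatic, then $H$ admits a CF-coloring with $O(k\log n)$ colors. One colors in rounds; in each round one non-monochromatically $k$-colors the currently uncolored vertices, gives the largest color class a brand-new color, and deletes it. At least a $1/k$ fraction is removed each round, so $O(k\log n)$ rounds (hence colors) suffice, and inspecting the last round in which a fixed hyperedge still has two uncolored vertices shows the result is conflict-free. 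The induced sub-hypergraph of $H_\F=(\F,\{S_B\}_B)$ on a subfamily $\F'\subseteq\F$ is exactly the hypergraph $H_{\F'}$ of the same type, so it suffices to produce, for every $\F'$, a non-monochromatic coloring of $H_{\F'}$ using only $O(1)$ colors.

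For the geometric core, write each $D\in\F$ as a disc with center $p$ and radius $r$, and a query disc $B$ as $(q,s)$ with center $q$ and radius $s$. Then $D\cap B\neq\emptyset$ iff $|q-p|\le r+s$, i.e. iff $f_D(q):=|q-p|-r\le s$, where $f_D$ is the additively weighted distance to $D$; thus $S_B=\{D\in\F:f_D(q)\le s\}$. I will build the graph $G_{\F'}$ on $\F'$ whose edges are the pairs $\{D,D'\}$ that are the two closest discs (in the additively weighted distance) at some point of the plane, equivalently the pairs defining a nonempty face of the order-$2$ additively weighted (Apollonius) Voronoi diagram of $\F'$. The key estimate is that $G_{\F'}$ has only $O(|\F'|)$ edges: the bisector of two such distance functions is a single hyperbolic arc, so the functions behave like pseudo-lines (equivalently, lifting $D$ to the translate $K_D=\{(q,s):|q-p|\le s+r\}$ of a fixed cone in $\Re^3$, the discs met by $B$ are exactly the cones containing $(q,s)$, and the first two levels of the arrangement of these $n$ congruent cones have linear complexity). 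Since this bound is hereditary, $G_{\F'}$ is $O(1)$-degenerate and hence properly colorable with a constant number of colors.

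I claim that any proper coloring of $G_{\F'}$ is already a non-monochromatic coloring of $H_{\F'}$. Indeed, take $B=(q,s)$ with $|S_B\cap\F'|\ge 2$, and let $D_1,D_2\in\F'$ realize the two smallest values of $f_D(q)$ over $D\in\F'$. Because at least two discs of $\F'$ satisfy $f_D(q)\le s$, both $D_1,D_2$ lie in $S_B\cap\F'$; and since $D_1,D_2$ are by construction the two closest discs of $\F'$ at the point $q$, the pair $\{D_1,D_2\}$ is an edge of $G_{\F'}$ (perturbing $q$ infinitesimally if needed to break ties). Hence $D_1$ and $D_2$ receive different colors, so $S_B\cap\F'$ is not monochromatic. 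Applying this for every subfamily and feeding it into the framework of the first paragraph yields a CF-coloring of $H_\F$ with $O(\log n)$ colors.

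Asymptotic tightness is immediate from the already known case: taking $B$ to be a degenerate disc (a single point $x$) turns the requirement into ``for every point $x$, the set $\{D\in\F:x\in D\}$ contains a uniquely colored disc,'' which is precisely the hypergraph of Theorem~\ref{thm:ELRS1}. Any coloring valid for all query discs is in particular valid for all query points, so the $\Omega(\log n)$ lower bound of Theorem~\ref{thm:ELRS1} transfers verbatim. The main obstacle I anticipate is the linear edge bound on $G_{\F'}$, i.e. the linear complexity of the order-$(\le 2)$ additively weighted Voronoi diagram of discs; this is classical for Apollonius diagrams, but to keep the paper self-contained I would derive it from the congruent-cone lower-envelope (pseudo-disc) estimates developed for Theorem~\ref{Thm:Pseudo-discs}, together with the routine verification that the pair-count is hereditary.
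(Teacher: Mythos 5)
Your proof is correct, but it reaches the key combinatorial fact by a genuinely different route than the paper. Both arguments run the same engine (Theorem~\ref{Lemma:Proper-to-CF}: hereditary constant-chromatic non-monochromatic colorability implies an $O(\log n)$ CF-coloring), and both tightness arguments coincide (degenerate query discs reduce to Theorem~\ref{thm:ELRS1}). The difference is in how one shows that the Delaunay-type graph $G_{\F'}$ is sparse. The paper keeps the ``exactly two discs met'' definition of the edges and proves \emph{planarity} directly: each edge is drawn as a two-segment polygonal path through $Cent(d_1)$, $Cent(b)$, $Cent(d_2)$, and a perpendicular-bisector argument shows independent edges never cross, so Hanani--Tutte applies and one gets a degree-$\le 5$ vertex hereditarily. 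You instead lift each disc to a translate $K_D$ of a fixed cone in $\Re^3$, identify the hyperedges $S_B$ with point-stabbing sets of these cones, and bound the number of ``two closest'' pairs by the linear complexity of the order-$(\le 2)$ additively weighted Voronoi diagram (equivalently, a Clarkson--Shor bound on the first two levels of the cone arrangement, given that the lower envelope is linear). Both are sound; the fact you defer is indeed classical and hereditary, and your tie-breaking caveat (defining edges as pairs \emph{among} the two closest, or perturbing) is a harmless technicality. What the paper's route buys is self-containedness, an explicit small constant (planarity gives $6$, improvable to $4$), and --- crucially for the rest of the paper --- a drawing argument that survives the passage to pseudo-discs in Theorem~\ref{Thm:Pseudo-discs}, where no additively weighted distance exists. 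What your route buys is brevity modulo a citation, and it is in fact close in spirit to the paper's own alternative proof in Appendix~\ref{sec:alternative}, which likewise trades the explicit drawing for union-complexity/levels machinery (Theorems~\ref{thm:k_admiss} and~\ref{thm:linear_comp}); but note that, like that appendix, your argument would not by itself recover Theorem~\ref{Thm:Pseudo-discs}.
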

It is easy to verify that Theorems~\ref{thm:ELRS1} and~\ref{thm:ELRS2} are special cases of Theorem~\ref{Thm:Antennas} in which some of the discs have radius equal to zero, namely these are points.

Theorem~\ref{Thm:Antennas} generalizes the original application of CF-coloring (for frequency assignment) to the scenario in which each client is a receiver that also has a range of reception, and thus, should be modeled by a disc rather than by a single point. Thus $O(\log n)$ frequencies are sufficient to assign to the given $n$ antennas no matter where the mobile clients are placed and no matter what their reception range is.

In addition, we show that Theorem~\ref{Thm:Antennas} holds in the stronger setting of \emph{list coloring}, in which the color of each disc must be chosen from a set of $O(\log n)$ prescribed colors (which corresponds to the case where each antenna is equipped with a list of $O(\log n)$ possible frequencies):
\begin{theorem}\label{Thm:List}
Let $\F$ be a family of $n$ discs in the plane, and assume that each element $D \in \F$ is assigned a list $S_D$ of $O(\log n)$ possible colors (where there is no assumption on the relation between the lists of different vertices). Then one can color the elements of $\F$ by choosing, for each disc, one of the colors from its list in such a way that for any disc $B$, the set $S_B = \{D \in \F: D \cap B \neq \emptyset\}$ contains a uniquely-colored element. The bound $O(\log n)$ (on the size of the lists) is asymptotically tight.
\end{theorem}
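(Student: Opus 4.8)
The plan is to extract from the proof of Theorem~\ref{Thm:Antennas} not merely a coloring but the underlying \emph{hierarchical} structure it produces, and then to run the coloring greedily from the lists. Recall that the iterative argument behind Theorem~\ref{Thm:Antennas} repeatedly peels off a ``heavy'' proper color class of the current touching-hypergraph (whose hyperedges are the sets $S_B$), where a proper coloring is one in which no hyperedge of size at least two is monochromatic. Since every sub-hypergraph is properly colorable with $O(1)$ colors, each peeled class contains a constant fraction of the remaining discs, so after $m = O(\log n)$ rounds every disc has been assigned a \emph{level}; moreover, for every query disc $B$ the set $S_B$ contains a \emph{unique} disc of maximum level (its \emph{top}), since the surviving part of $S_B$ at the last round it meets lies inside a single color class and therefore cannot have size $\geq 2$.

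To turn this into a list coloring, I would organize the levels as a rooted forest of depth $O(\log n)$ --- peeling the heavy class as the current set of roots and recursing on the (geometrically separated) remainder --- with the key property that for every $B$ the top of $S_B$ is an \emph{ancestor} in this forest of all other discs in $S_B$. Given such a forest, I color the discs from the roots downwards, always choosing for each disc a color from its own list that differs from the colors already committed to its ancestors. Each disc then has at most $O(\log n)$ forbidden colors (one per ancestor), so a list of size $C\log n$ with $C$ exceeding the depth constant always leaves an admissible color. The resulting coloring is conflict-free: for any $B$, every disc of $S_B$ other than its top is a descendant of the top and was therefore colored so as to avoid the top's color, making the top uniquely colored in $S_B$.

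The main obstacle is to realize this ancestor property for the disc-touching hypergraph. In the one-dimensional analogue (points and intervals) a single separating element splits the instance, yielding a balanced binary decomposition for free; in the plane a single disc need not separate the family, so the decomposition must exploit the geometry. The plan is to use precisely the structural facts already established for Theorem~\ref{Thm:Antennas} --- the linear union complexity of discs (and pseudo-discs) and the bounded degeneracy of the associated Delaunay-type graph --- to guarantee that, at each level, a given disc is forced to avoid only $O(1)$ of the discs placed at that level rather than an uncontrolled number; summed over the $O(\log n)$ levels this keeps the forbidden set of size $O(\log n)$. Making this per-level interaction bound precise, while keeping the peeled classes heavy enough to bound the depth by $O(\log n)$, is the crux of the argument; should the strict common-ancestor property be attainable only up to $O(1)$ incomparable conflicts per level, these extra conflicts can simply be added to each disc's forbidden set without affecting the $O(\log n)$ bound.

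Finally, the tightness of the bound $O(\log n)$ on the list size requires no new construction: assigning every disc the common list $\{1,\dots,t\}$ reduces list coloring to an ordinary coloring with $t$ colors, so the lower bound $\Omega(\log n)$ from Theorem~\ref{Thm:Antennas} immediately forces $t = \Omega(\log n)$.
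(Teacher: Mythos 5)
Your overall peeling framework and the tightness argument are fine, but the core of your plan has a genuine gap, and it is precisely the step you yourself flag as the ``crux''. The forest/ancestor structure you want does not exist in general, and the weaker fallback (``only $O(1)$ incomparable conflicts per level'') is false. Concretely: let $d$ be a disc assigned to level $1$ and let $t_1,\dots,t_m$ be pairwise far-apart discs, each touching $d$ near a different point of its boundary, all assigned to level $2$ (the peeling can easily produce this, since after $d$ is removed the sets $S_{B}$ restricted to the survivors have size $1$ and impose no constraint). For each $k$ a tiny query disc $B_k$ gives $S_{B_k}=\{d,t_k\}$ with top $t_k$, so $d$ must receive a color different from \emph{every} $t_k$, and $m$ can be $\Theta(n)$. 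Thus a single vertex can have unboundedly many higher-level ``tops'' over it at one level; if you color top-down choosing arbitrary admissible colors from the lists, the $t_k$ may collectively exhaust $d$'s entire list and the greedy process dies. The only way out is to force all same-level tops that conflict with $d$ to share a \emph{single} color, so that each level costs $d$ only one forbidden color --- but with unrelated lists there need not be any common color available to a whole level, and arranging this coordination is exactly the nontrivial content of the list version of the framework. Your proposal does not supply it.

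The paper's proof sidesteps all of this: it simply invokes the list-coloring version of the proper-coloring-to-CF-coloring framework, namely the theorem of Cheilaris et al.\ (Theorem~\ref{Thm:CSS}), which states that if every induced sub-hypergraph of $H$ is properly colorable with a constant number $k$ of colors then $\mathrm{ch}_{CF}(H)\le 1+\log_{1+\frac{1}{k-1}}n=O(\log n)$. The hypothesis is already established in the proof of Theorem~\ref{Thm:Antennas} (every induced sub-hypergraph of $I$ is properly $6$-colorable, via planarity of the auxiliary graphs $G_{I'}$), so the list statement follows in two lines, and tightness follows from $\mathrm{ch}_{CF}\ge\chi_{CF}$ exactly as you say. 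If you want a self-contained argument rather than the citation, the piece you must add is the mechanism inside Theorem~\ref{Thm:CSS}: at each round one restricts to the vertices whose lists contain the currently most popular color, properly colors that induced sub-hypergraph, assigns that common color to the largest class, and deletes the color from the remaining lists; the round structure, not a per-ancestor avoidance, is what makes the top of each $S_B$ uniquely colored.
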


{\noindent \bf Intersection graphs of other geometric objects}

\medskip

For intersection graphs of axis-parallel rectangles in the plane, we obtain an upper bound on the closed CF-chromatic number.
\begin{theorem}\label{Thm:Rectangles}
Let $\F$ be family of $n$ axis-parallel rectangles in the plane and let $G$ be its intersection graph. Then $\chi_{CF}^{cn}(G) = O(\log n)$.
\end{theorem}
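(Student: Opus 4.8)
The plan is to build a \emph{unique-minimum} closed CF-coloring of $G$, i.e.\ a coloring in which the smallest color appearing in every closed neighborhood $N_G[R]$ is attained by exactly one rectangle; such a coloring is in particular a closed CF-coloring. The guiding idea is to exploit the product structure of axis-parallel rectangles to reduce the two-dimensional problem to a bounded number of one-dimensional (interval) problems. Concretely, I would first build a balanced binary decomposition of the $x$-axis: each internal node $u$ carries a vertical line $\ell_u$, and each rectangle $R$ is assigned to the \emph{highest} node $\nu(R)=u$ whose line $\ell_u$ meets the $x$-projection of $R$. Two basic facts drive everything. First, all rectangles assigned to a common node $u$ cross $\ell_u$, hence their $x$-projections pairwise overlap, so their mutual intersection pattern is governed solely by their $y$-projections; thus \emph{within} a node we face a one-dimensional interval intersection problem. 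Second, if two rectangles of $\F$ intersect, then the nodes to which they are assigned are comparable (one an ancestor of the other) in the decomposition tree, since rectangles assigned to incomparable nodes have disjoint $x$-ranges. The decomposition has depth $O(\log n)$.

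First I would settle the one-dimensional base case: the intersection graph of $n$ intervals admits a unique-minimum closed CF-coloring with $O(\log n)$ colors. Here the Helly property is convenient: a clique of intervals (and, more generally, of axis-parallel boxes) is exactly a subfamily sharing a common point, so cliques are resolved by a recursive-median argument on the line, and a standard divide-and-conquer yields the bound.

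Next, to lift to two dimensions, I would assign to each rectangle a color whose primary component is the \emph{level} (depth) of $\nu(R)$ and whose secondary component is its color in the one-dimensional instance living at that node. The comparability fact is what makes this tractable: for a fixed rectangle $R$, all members of $N_G[R]$ sit at nodes comparable to $\nu(R)$, and since $R\in N_G[R]$ has level $\mathrm{depth}(\nu(R))$, the members of $N_G[R]$ of \emph{smallest} level all lie in a single node (the unique ancestor of $\nu(R)$ at that level). Thus the search for a uniquely colored neighbor is localized to one node's one-dimensional instance, where the base case applies.

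The main obstacle is that this two-component scheme, taken naively, uses the product of two logarithms---$O(\log n)$ levels times $O(\log n)$ intra-node colors---giving only $O(\log^2 n)$. The heart of the proof is therefore to \emph{merge} the two logarithmic budgets into a single palette of size $O(\log n)$. My plan is to interleave the $x$-decomposition with the $y$-recursion into one balanced recursion of total depth $O(\log n)$ (a \textsf{kd}-tree--like structure), in which the set stored at each node is a \emph{clique}---by Helly, a family of rectangles through a common point---and to color by level in this unified recursion. The crux is then to prove that in every closed neighborhood the minimum level is attained by exactly one rectangle: controlling how the neighbors of a rectangle distribute across the levels of the unified recursion, and showing that the clique structure at each node forces uniqueness at the extremal level while spending only $O(\log n)$ colors overall, is the step I expect to be the most delicate. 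The Helly property and the ancestor--descendant comparability of interacting nodes are the structural facts I would rely on to carry it out.
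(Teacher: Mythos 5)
Your proposal correctly identifies the right decomposition (a balanced median recursion on vertical lines, with the rectangles crossing each line forming a one-dimensional interval instance), but it stalls exactly at the step that matters. You observe that ``$O(\log n)$ levels times $O(\log n)$ colors per level'' only gives $O(\log^2 n)$, and your proposed remedy --- interleaving the $x$- and $y$-recursions into a single \textsf{kd}-tree-like structure of depth $O(\log n)$ whose nodes store cliques, colored by level --- is not carried out, and as described it does not work. The set of rectangles crossing a single splitting line is \emph{not} a clique (they pairwise overlap in one coordinate only), so getting clique nodes forces a secondary recursion along the line, and it is not established that the combined structure has depth $O(\log n)$. Worse, even granting such a structure, if two or more members of $N_G[R]$ land in the same clique node at the minimal level they all receive the same color, so the minimum color in $N_G[R]$ is not unique; breaking ties inside a node costs extra colors per node and reintroduces the second logarithmic factor. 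You explicitly flag this as ``the step I expect to be the most delicate,'' which is to say the proof is missing its core.

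The insight you are missing is that the per-level cost is a \emph{constant}, not a logarithm: the intersection graph of any family of intervals on a line admits a closed CF-coloring with only $3$ colors. (The paper's Lemma~\ref{Lemma:Line}: greedily select a chain $s_1,s_2,\dots$ of intervals covering $\bigcup\F$, each chosen to maximize its right endpoint among the admissible candidates, color them $1,2$ alternately, and color everything else $3$; one checks that $s_i\cap s_j=\emptyset$ for $j\ge i+2$, so each $s_i$ is its own uniquely colored closed neighbor, and that no interval colored $3$ can meet two intervals of color $1$ and two of color $2$.) With this, your $x$-only median recursion already suffices: spend $3$ fresh colors on the rectangles crossing the median line at each level, reuse the palette across the two sides (they are separated by the line), and get $f(n)\le f(n/2)+3=O(\log n)$. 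Note this exploits that the coloring is \emph{closed}: a rectangle in $\F_\ell$ always has itself or a crossing neighbor as its unique witness, so one never needs to look outside the level at which a rectangle is stored. Your insistence on a unique-minimum coloring and on resolving cliques via Helly is what drives the $1$-dimensional cost up to $\Theta(\log n)$ and creates the obstruction you could not overcome.
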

We do not know whether the bound is tight, and whether a similar bound can be proved in the harder case of pointed CF-coloring.

%Another class of objects we consider is $\rho$-fat objects.
%\begin{definition}
%A simple Jordan region $C$ in the plane is called $\rho$-fat if there exists $x \in C$ and discs $A,B$ centered at $x$ with radiuses $r_A,r_B$ (respectively), such that $A \subset C \subset B$ and $r_B/r_A \leq \rho$.
%\end{definition}
%These objects were studied in a number of papers. For example, it was shown that the union complexity of $n$ such objects is at most $O_{\rho}(n^{1+\eps})$, for any $\eps>0$ (see~\cite{Efrat1}). \textbf{[More references are needed here.]}

%For a $\rho$-fat object $C$, we say that the \emph{size} of $C$ is the maximal possible radius, $\max \{r_A\}$, where the maximum is taken over all pairs of concentric discs $A,B$ with $A \subset C \subset B$ and $r_B/r_A \leq \rho$. The \emph{size-ratio} of a family $\F$ of $\rho$-convex objects is $\max_{C_1,C_2 \in \F} \frac{\mathrm{size}(C_1)}{\mathrm{size}(C_2)}$.
%
%We prove the following:

\medskip

Finally, we provide an upper bound on the pointed and closed chromatic numbers of intersection graphs of $\rho$-fat objects (i.e., simple Jordan regions $C$ in the plane for which there exists $x \in C$ and discs $A,B$ centered at $x$ with radiuses $r_A,r_B$ (respectively), such that $A \subset C \subset B$ and $r_B/r_A \leq \rho$).
\begin{proposition}\label{Thm:rho-fat}
For any $\rho,k \geq 1$, let $\F$ be a finite family of $\rho$-fat objects in the plane such that the size-ratio of $\F$ is $k$, and let $G$ be its intersection graph. Then $\chi_{CF}^{pn}(G) = O(\rho^2 k^2)$ and $\chi_{CF}^{pn}(G) = O(\rho^2 \log k)$.
\end{proposition}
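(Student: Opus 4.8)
The plan is to derive both bounds from a single geometric packing estimate, applying it either globally (for the $\rho^2k^2$ bound) or scale by scale (for the $\rho^2\log k$ bound). Throughout I normalize so that the smallest outer radius of a member of $\F$ equals $1$; then every outer radius lies in $[1,k]$ and, by $\rho$-fatness, every object contains an inner disc of radius at least $1/\rho$. The starting point is the packing lemma: for any $C\in\F$, every collection of pairwise disjoint members of $\F$ that all meet $C$ has size $O(\rho^2k^2)$. Indeed, all such members lie in a disc of radius $O(k)$ about the centre of $C$, each carries a disjoint inner disc of radius $\ge 1/\rho$, and comparing areas bounds their number by $O(\rho^2k^2)$. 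In graph terms, every neighbourhood $N_G(v)$ has independence number $O(\rho^2k^2)$; and if instead $C$ and all the disjoint objects come from a single scale (outer radii within a factor $2$), the same computation gives the sharper bound $O(\rho^2)$. I also record the observation, already used in the introduction, that cliques are essentially free for pointed conflict-free coloring, so the clusters of mutually intersecting equal-scale objects create no difficulty.

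For the bound $\chi_{CF}^{pn}(G)=O(\rho^2k^2)$ I would produce a coloring in a single level. Overlaying a grid of cell side $1/\rho$ makes the members inside each cell pairwise intersecting (their centres are within $\sqrt2/\rho$ and their inner radii are at least $1/\rho$), so each cell hosts a clique, while every member meets members in only $O(\rho^2k^2)$ cells. I would then colour so that the clique inside each cell is handled with $O(1)$ colours and cells that can contribute to a common neighbourhood receive distinct colours; the packing lemma caps the number of such cells, so $O(\rho^2k^2)$ colours suffice, and a unique-maximum argument shows that every $N_G(v)$ acquires a uniquely coloured vertex.

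For the sharper bound $\chi_{CF}^{pn}(G)=O(\rho^2\log k)$ I would decompose $\F$ into $t=O(\log k)$ scales $\mathcal F_1,\dots,\mathcal F_t$, where $\mathcal F_j$ gathers the objects of outer radius in $[2^{j-1},2^j)$. Within one scale the grid argument localizes each neighbourhood to $O(\rho^2)$ cells and the packing lemma caps its independence number by $O(\rho^2)$, and I expect this single-scale structure to admit a pointed conflict-free colouring with only $O(\rho^2)$ colours, \emph{independently of the number of objects in the scale} --- this is the regime in which the $\Omega(\log n)$ lower bounds for disc families cannot be realized, precisely because they require objects spread over many scales. I would then assemble the scales into a global colouring, spending a fresh block of $O(\rho^2)$ colours per scale, for $O(\rho^2\log k)$ colours in total.

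The combination step is the main obstacle, and it is here that the bounded size ratio must really be exploited. The difficulty is a vertex $v$ whose decisive neighbours lie in scales other than its own --- most severely, a large object meeting many pairwise disjoint small objects, a configuration whose neighbourhoods are exactly the hard instances that force $\Theta(\log n)$ colours in the Even et al.\ setting. The resolution I would pursue mirrors their recursive halving: rather than colouring the scales in isolation, I would run a level-by-level construction that at each level serves all neighbourhoods of bounded local depth and recurses on the remainder, and argue that the bounded size ratio forces the recursion to terminate after $O(\log k)$ levels while each level contributes only $O(\rho^2)$ colours. Verifying that every neighbourhood --- including the cross-scale ones --- receives a uniquely coloured vertex under this layering, and that the depth is governed by $\log k$ rather than $\log n$, is the crux; the packing lemma and the grid decomposition are comparatively routine.
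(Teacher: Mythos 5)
There are two separate problems. First, for the bound $O(\rho^2k^2)$ your skeleton (grid at the scale of the inner radius, packing bound of $O(\rho^2k^2)$ on the relevant cells) matches the paper's, but the actual conflict-free mechanism is the part you leave vague, and the ``unique-maximum'' idea does not work as stated. If each cell-clique is given a fixed palette of $O(1)$ private colors, then the set $N_G(v)\cap A$ for a cell $A$ is an essentially arbitrary subset of that clique, and any two same-colored objects of $A$ already form a subset with no uniquely colored element; so neither the maximum-colored cell nor any other cell is guaranteed to contribute a unique color to $N_G(v)$. The paper avoids this by \emph{not} trying to CF-color whole cells: it designates a single representative object per nonempty cell, colors it with the cell's color, dumps every other object into one dummy color, and then runs a recoloring pass that promotes one dummy neighbor of any representative that would otherwise be stranded. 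The point is that a representative appearing in $N_G(v)$ is \emph{automatically} uniquely colored there, because cells of equal color are at distance $4k\lceil\rho\rceil$ apart and $v$'s neighbors all lie within $O(k\rho)$ of $x_v$ (in the paper's normalization). That representative/dummy structure is the missing idea in your first part.

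Second, the bound $O(\rho^2\log k)$: the occurrence of $\chi_{CF}^{pn}$ there is a typo in the statement --- the restatement and proof in Appendix~\ref{sec:rho-fat} give $\chi_{CF}^{cn}(G)=O(\rho^2\log k)$, i.e.\ the \emph{closed} CF-chromatic number, and the paper explicitly remarks that its scale decomposition does \emph{not} yield the pointed bound, for exactly the cross-scale reason you identify (an object with no neighbor in its own scale class). For closed neighborhoods the combination step you call ``the crux'' is trivial: color each of the $O(\log k)$ scale classes $\F_j$ (size-ratio $\le 2$) with a fresh block of $O(\rho^2)$ colors using Part~(1) plus the pointed-to-closed reduction from the introduction; every $C\in\F_j$ then has a uniquely colored vertex in $N_G[C]\cap\F_j$ (possibly $C$ itself), and freshness of the palettes preserves uniqueness in all of $N_G[C]$. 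Your proposal instead commits to proving the pointed version and then concedes that the key step --- a level-by-level recursion terminating after $O(\log k)$ levels that serves cross-scale neighborhoods --- is unverified. As written, that step is not an argument but a restatement of the difficulty, so the second half of the proposal is a genuine gap (and, if it could be completed, would prove something strictly stronger than what the paper establishes).
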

An immediate corollary of Proposition~\ref{Thm:rho-fat} is that if $\F$ is a family of translates of any simple Jordan region with a non-empty interior, then its intersection graph admits a pointed CF-coloring with a constant number of colors.

\subsection{Organization of the paper}

The rest of the paper is organized as follows. In Section~\ref{sec:pd} we study CF-coloring of the intersection graph of pseudo-discs, and prove Theorems~\ref{Thm:Pseudo-discs},~\ref{Thm:Antennas}, and~\ref{Thm:List} (except for the tightness of Theorem~\ref{Thm:Pseudo-discs} that is proved in Appendix~\ref{App:Tightness}). In Appendix~\ref{sec:rectangles} we study the case of axis-parallel rectangles and prove Theorem~\ref{Thm:Rectangles}. We discuss $\rho$-fat objects and prove Proposition~\ref{Thm:rho-fat} in Appendix~\ref{sec:rho-fat}. Finally, we present an alternative proof of (part of) Theorem~\ref{Thm:Pseudo-discs} in Appendix~\ref{sec:alternative}.

\section{Intersection Graph of Pseudo-discs}
\label{sec:pd}

In this section we show that the intersection graph of a family of $n$ pseudo-discs admits a pointed CF-coloring with $O(\log n)$ colors. Our technique also allows us to prove Theorem~\ref{Thm:Antennas} which we believe is of special interest. Indeed, as mentioned in the introduction, the two main results of~\cite{ELRS} follow immediately from our Theorem~\ref{Thm:Antennas} as special cases.
We start with a few definitions and state previous results that we use in the course of our proofs.

\subsection{Definitions}

A hypergraph is a pair $H=(V,E)$, where $E$ (the hyperedge set) is a set of subsets of $V$. For a subset $V' \subset V$ the induced sub-hypergraph $H(V')$ is the hypergraph $(V',E')$, where $E'=\{e \cap V': e \in E\}$.

A proper coloring of a hypergraph with $k$ colors is a function $\varphi:V \rightarrow \{1,2,\ldots,k\}$ such that any $e \in E$ with $|e| \geq 2$ is not monochromatic, meaning that there exist $x,y \in e$ with $\varphi(x) \neq \varphi(y)$. $\chi(H)$ is the least integer $k$ such that $H$ admits a proper coloring with $k$ colors.

A conflict-free (CF) coloring of a hypergraph $H$ with $k$ colors is a function $\psi: V \rightarrow \{1,2,\ldots,k\}$ such that for any $e \in E$ there exists  $x \in e$ that is uniquely colored,
%(or, in short, unique)
meaning that $\psi(y) \neq \psi(x)$ for all $x \neq y \in e$. $\chi_{CF}(H)$ is the least integer $k$ such that $H$ admits a CF-coloring with $k$ colors.

We will use the following theorem (proved in~\cite{smoro} in a much more general context), which relates proper (non-monochromatic) coloring of a hypergraph to a CF-coloring (though, with more colors).
\begin{theorem}[\cite{smoro}]\label{Lemma:Proper-to-CF}
Let $H$ be a hypergraph. If there exists a constant $k$ such that $\chi(H') \leq k$ for any induced sub-hypergraph $H'$ of $H$, then $\chi_{CF}(H) \leq 1+ \log_{1+\frac{1}{k-1}}n = O( \log n)$.
\end{theorem}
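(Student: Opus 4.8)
The plan is to CF-color $H$ by a greedy iterative scheme that repeatedly peels off a large ``independent'' set, where the independence at each stage is supplied by the proper-colorability hypothesis. Concretely, I would set $V_1 = V$ and, as long as $V_i \neq \emptyset$, apply the hypothesis to the induced sub-hypergraph $H(V_i)$: since $\chi(H(V_i)) \leq k$, there is a proper coloring of $H(V_i)$ with at most $k$ colors, and by pigeonhole its largest color class $C_i$ satisfies $|C_i| \geq |V_i|/k$. I would assign the new color $i$ to all vertices of $C_i$, set $V_{i+1} = V_i \setminus C_i$, and iterate. Thus the colors $1,2,3,\ldots$ are introduced in increasing order, and every vertex eventually receives exactly one color.

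The verification of conflict-freeness is the crux, and I expect the bookkeeping here to be the main obstacle. Fix a hyperedge $e$ with $|e| \geq 2$ (singletons are trivially conflict-free), and let $i^\ast$ be the largest color index appearing on a vertex of $e$. The key observation is that the vertices of $e$ still uncolored at the start of stage $i^\ast$ are exactly those that end up colored $i^\ast$; that is, $e \cap V_{i^\ast} \subseteq C_{i^\ast}$. Indeed, any vertex surviving into $V_{i^\ast}$ was not colored in stages $1,\ldots,i^\ast-1$, so its color is at least $i^\ast$, while maximality of $i^\ast$ forces it to be exactly $i^\ast$. Now $e \cap V_{i^\ast}$ is, by the very definition of the induced sub-hypergraph $H(V_{i^\ast})$, a hyperedge of $H(V_{i^\ast})$, and it is monochromatic since it lies inside the single color class $C_{i^\ast}$. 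Because $C_{i^\ast}$ comes from a \emph{proper} coloring of $H(V_{i^\ast})$, no hyperedge of size at least $2$ can be monochromatic, forcing $|e \cap V_{i^\ast}| = 1$. Hence exactly one vertex of $e$ carries the maximum color $i^\ast$, and that vertex is uniquely colored in $e$, which is precisely the conflict-free condition.

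Finally I would bound the number of colors. Since $|C_i| \geq |V_i|/k$ at every stage, the uncolored set shrinks geometrically: $|V_{i+1}| \leq (1 - \tfrac{1}{k})|V_i| = \tfrac{k-1}{k}|V_i|$, and therefore $|V_{i+1}| \leq n\bigl(\tfrac{k-1}{k}\bigr)^{i}$. The process terminates once this quantity drops below $1$, i.e.\ after at most $\log_{k/(k-1)} n = \log_{1+\frac{1}{k-1}} n$ full steps, plus possibly one extra color to dispose of the last few vertices. This yields the claimed bound $\chi_{CF}(H) \leq 1 + \log_{1+\frac{1}{k-1}} n = O(\log n)$. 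The only genuinely delicate point is the conflict-free verification above; the geometric decay and the final count are routine once the peeling scheme is in place.
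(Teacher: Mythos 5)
Your proposal is correct, and it is essentially the standard proof of this result: the paper does not reprove the theorem but cites it from~\cite{smoro}, where the argument is exactly this iterative peeling of the largest color class of a proper coloring, with the maximum color index on a hyperedge serving as the unique color. Both the conflict-free verification (via $e\cap V_{i^\ast}$ being a monochromatic induced hyperedge, hence a singleton) and the geometric-decay count matching $1+\log_{1+\frac{1}{k-1}}n$ are as in the cited source.
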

%For sake of completeness, we present a sketch of the proof.
%
%\begin{proof}[Proof-Sketch.]
%First, we color $H$ properly with $k$ (auxiliary) colors. The largest color class is of size $\geq n/k$. We take this class (or one of the largest classes if there are several classes of the same size), assign to all its vertices the permanent color 1, and remove them from the hypergraph. We remain with an induced sub-hypergraph $H'$ with $n'$ vertices, and we repeat the procedure for it. By the hypothesis, $H'$ can be properly colored with $k$ colors, and thus, there exists a color class with $\geq n'/k$ colors. We assign to all its vertices the permanent color 2 and remove them. We can continue this process until we obtain a coloring of the entire hypergraph $H$ with $O(\log n)$ permanent colors.
%
%We claim that the coloring is conflict-free. Indeed, for each hyperedge $e \in E(H)$, the largest color assigned to a vertex $x \in e$ was assigned to a single vertex of $e$ (as otherwise, the coloring at that step was not proper), and so it is a uniquely colored vertex in $e$.
%\end{proof}

\subsection{Proof of Theorem~\ref{Thm:Pseudo-discs}}

In this subsection we prove Theorem~\ref{Thm:Pseudo-discs}, namely, that the intersection graph of any family of $n$ pseudo-discs in the plane satisfies $\chi_{CF}^{pn}(G) = O(\log n)$, and that this bound is asymptotically tight.

Throughout this section, $\F$ denotes a family of $n$ pseudo-discs in the plane, and $G=(V,E)$ denotes the intersection graph of $\F$, meaning that $V=\F$, and $(d_1,d_2) \in E$ if $d_1 \cap d_2 \neq \emptyset$.
%Slightly abusing notation, we shall sometimes identify vertices of the graph with the elements of $\F$ they represent.
 $B$ denotes a maximum-size independent set in $V(G)$ (i.e., a maximum-size set of pairwise disjoint pseudo-discs). Put $p =|B|$.

Our proof is composed of three parts:
\begin{enumerate}
\item We present a coloring of the vertices of $B$ with $O(\log p)$ colors such that each vertex in $V \setminus B$ admits a uniquely-colored neighbor. Note that this step already implies that $\chi_{CF}^{cn}(G) = O(\log p)$, since we can color all vertices of $V \setminus B$ with a single additional color, and then, with respect to close neighborhoods, each vertex $v \in B$ will be a uniquely-colored `neighbor' of itself.

\item We present a coloring of the vertices of $V \setminus B$ with $O(\log(n-p))$ colors such that each vertex in $B$ admits a uniquely-colored neighbor. Together with the first step, this proves that $\chi_{CF}^{pn}(G) = O(\log n)$.

\item We present an example of an intersection graph of $n$ discs that satisfies $\chi_{CF}^{pn}(G) = \Omega(\log n)$, thus proving the tightness of the theorem. This part is presented in Appendix~\ref{App:Tightness}.
\end{enumerate}
For ease of reading, we present each step in a separate subsection.

\subsubsection{Coloring the vertices of $B$}

Let $H=(V,E)$ be the hypergraph whose vertex set is $V=B$ and whose hyperedges are all subsets of the form $e_d = \{b \in B: b \cap d \neq \emptyset\}$, for all $d \in V \setminus B$. Since $B$ is a maximal independent set in $V$, each $d \in V \setminus B$ has a neighbor in $B$, and thus, a CF-coloring of $H$ provides any $d \in V \setminus B$ with a uniquely-colored neighbor in $N_G(d)$, as desired.

By Theorem~\ref{Lemma:Proper-to-CF}, in order to show that $H$ admits a CF-coloring with $O(\log p)$ colors, it is sufficient to show that there exists a constant $k$ such that any induced sub-hypergraph $H' \subset H$ admits a proper coloring with $k$ colors. We will show this (for $k=4$) in two steps:
%\begin{enumerate}
%\item

\medskip \noindent (1). We enlarge $H$ to $\hat{H}$ by adding hyperedges in a certain way. Note that this step is somewhat counter-intuitive, as it makes our job strictly harder.

%\item
\medskip \noindent (2). We show that $\hat{H}$, as well as all of its induced sub-hypergraphs, are 4-colorable.
%\end{enumerate}

\medskip \noindent \textbf{Step 1: Enlarging $H$.} For this step, we use the following lemma proved by Pinchasi~\cite{Pin14}.
\begin{lemma}[~\cite{Pin14}]\label{Lemma:Rom1}
Let $\F$ be a family of pseudo-discs. Let $d \in \F$ and let $x \in d$ be any point. Then $d$ can be continuously shrunk to the point $x$ in such a way that at each moment, $\F$ continues to be a family of pseudo-discs.
\end{lemma}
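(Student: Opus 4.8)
The plan is to realize the shrinking as a \emph{monotone sweep} of $d$: to construct a continuous family of nested Jordan regions $d_t$, $t \in [0,1]$, with $d_0 = d$, $d_s \subseteq d_t$ for $t \le s$, and $\bigcap_{t} d_t = \{x\}$, and to verify that at every moment $t$ the boundary $\partial d_t$ meets each $\partial f$, $f \in \F \setminus \{d\}$, in at most two points. Since the other members of $\F$ are never moved, this is exactly the condition needed for $(\F \setminus \{d\}) \cup \{d_t\}$ to remain a family of pseudo-discs. It is convenient to encode the sweep by a continuous height function $g : d \to [0,1]$ with $g \equiv 0$ on $\partial d$ and a unique maximum $g(x) = 1$, whose superlevel sets $d_t = \{y \in d : g(y) \ge t\}$ are nested topological discs contracting to $x$; the sweeping curves are then the level curves $\{g = t\}$.

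First I would record the structural consequence of the pseudo-disc hypothesis that drives everything. For each $f \in \F \setminus \{d\}$ the boundary $\partial f$ meets $\partial d$ in at most two points, so $\partial f \cap \operatorname{int}(d)$ is either empty, a single open arc whose two endpoints lie on $\partial d$ (the case $|\partial f \cap \partial d| = 2$), or a single closed curve contained in $\operatorname{int}(d)$ (the case $f \subset \operatorname{int}(d)$). After a harmless perturbation we may assume all boundaries are smooth and in general position. The number of crossings of a sweeping curve $\{g = t\}$ with a fixed $\partial f$ changes only at tangency events, and each generic tangency changes it by $\pm 2$; since $\partial d_t$ meets $\partial f$ only along the single arc (or single closed curve) above, the whole task reduces to arranging that the restriction of $g$ to each such arc (respectively, closed curve) is \emph{unimodal} --- one interior maximum and no other local extremum --- which forces every level set to cross it at most twice.

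The main obstacle is that a single function $g$ must be simultaneously unimodal along \emph{all} of the arcs and closed curves coming from $\F \setminus \{d\}$, and these arcs cross one another (two of them meet in at most two points, since the corresponding $\partial f_i, \partial f_j$ do). One cannot treat the members independently, so the pseudo-disc structure of the family of arcs inside $d$ must be exploited directly. I would attack this by choosing a homeomorphism $\Phi : \overline{\mathbb{D}} \to d$ with $\Phi(0) = x$ and taking $g = 1 - |\Phi^{-1}(\cdot)|$, so that superlevels are automatically nested topological discs; the point is to select $\Phi$ so that each image curve $\Phi^{-1}(\partial f \cap d)$ is radially unimodal (met at most twice by every circle centred at the origin). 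The delicate part --- and where I expect the real work of Pinchasi's argument to lie --- is constructing such a $\Phi$ for the whole arrangement at once, or equivalently building the sweep by an inductive / continuity argument that introduces the neighbours one at a time and repairs the height function after each insertion while never creating a two-to-four tangency with a neighbour that already contributes two crossings. Verifying that no extra crossings are created precisely at the transitional moments --- when a level curve first becomes tangent to, or finally separates from, a neighbour boundary, and in particular when $d_t$ crosses a neighbour $f \subset \operatorname{int}(d)$ with $x \notin f$ --- is the step I would treat most carefully.
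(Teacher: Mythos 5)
First, note that the paper does not prove this lemma at all: it is imported verbatim from Pinchasi~\cite{Pin14} and used as a black box, so there is no in-paper argument to compare yours against. Judged on its own, your proposal sets up a reasonable framework --- realizing the shrinking as the superlevel sets of a height function $g$ on $d$ and reducing the pseudo-disc condition to unimodality of $g$ along each arc $\partial f\cap \operatorname{int}(d)$ --- and that reduction is sound. But it is not a proof, and you say so yourself: the existence of a single function $g$ (equivalently, a homeomorphism $\Phi$ of the disc) that is simultaneously unimodal along every arc of the arrangement is essentially a restatement of the lemma in different language, and you explicitly defer it (``where I expect the real work of Pinchasi's argument to lie''). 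Nothing in the proposal rules out the a priori possibility that no such $g$ exists.

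The concrete gap is in the inductive repair step. The arcs $\partial f\cap d$ for distinct $f$ cross one another (in up to two points each), so any local modification of $g$ made to restore unimodality along a newly inserted arc can create a new interior local extremum of $g$ restricted to an arc that was already handled, i.e.\ a level curve that suddenly meets an old neighbour four times. To close the induction you need an invariant preserved under insertion --- some ordering or combinatorial structure of the pseudo-disc arrangement --- and the proposal offers no candidate for one. A second, smaller omission: the terminal behaviour at the prescribed point $x$ is not controlled. The lemma lets $x$ be \emph{any} point of $d$, so $x$ may lie on some $\partial f$, or the sweep may have to fully traverse some $f\subset\operatorname{int}(d)$ not containing $x$; the tangency bookkeeping at the first and last contact moments is exactly where the crossing count can jump, and you flag this as the step you ``would treat most carefully'' without actually treating it. As it stands the proposal is a correct reduction plus an honest acknowledgement that the core construction is missing.
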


To enlarge $H$, we add to $V \setminus B$ `dummy' pseudo-discs iteratively using the process described in Lemma~\ref{Lemma:Rom1}. Specifically, for each $d \in V \setminus B$ with $|e_d| > 2$, we choose a point $x_{d}$ and perform the process of shrinking $d$ to the point $x_{d}$. At any stage during the process, the number of elements of $e_d$ that intersect the `shrinking $d$' either remains unchanged or decreases by 1 (if it decreases by more than 1 at once, this can be fixed by a small perturbation of the shrinking process). Hence, the process induces a finite set of equivalence classes of `shrunk versions' of $d$, arranged by the set of elements of $e_d$ that intersect the `shrinking $d$' at that moment. We take a single representative from each such equivalence class in which the `shrinking $d$' intersects at least two elements of $e_d$. This can be done in such a way that the boundaries of the representatives we take are disjoint. (Again, a small perturbation of the shrinking process may be needed here.) We add these representatives to $V \setminus B$ as `dummy' pseudo-discs.

We repeat this process for each $d \in V \setminus B$ with $|e_d| > 2$ (note that this can be done in such a way that the family $V \setminus B$, along with the added dummy pseudo-discs, remains a family of pseudo-discs), and denote the resulting family by $\widehat{V \setminus B}$. We then define $\hat{H}$ by setting the vertices to be $V(\hat{H})=V(H)=B$ and the hyperedges to be all sets of the form $e_d = \{b \in B: b \cap d \neq \emptyset\}$, for all $d \in \widehat{V \setminus B}$. Of course, $\hat{H}$ contains $H$, and thus, it is sufficient to prove that any induced sub-hypergraph $H' \subset \hat{H}$ admits a proper coloring with $4$ colors.
%For sake of clarity, we slightly abuse notation, and from now on, denote $\hat{H}$ by $H$ (in other words, we assume w.l.o.g. that all the dummy pseudo-discs were members of the original family $\F$).

\medskip \noindent \textbf{Step 2: CF-Coloring  $\hat{H}$ with $O(\log p)$ colors.} For this step, we use
the fact that after the enlarging step, each hyperedge of size $>2$ contains a sub-hyperedge of size 2.

\begin{definition}
A hypergraph $H$ is called a \emph{rank-$i$ hypergraph} if for any hyperedge $e \in E(H)$ with $|e|>i$ there exists a hyperedge $e' \subset e$ with $|e'|=i$. \end{definition}

\begin{definition}
Given a hypergraph $H$, the Delauney graph $D_H$ is defined by $V(D_H)=V(H)$ and $E(D_H)= \{e \in E(H): |e|=2\}$.
\end{definition}
The basic observation we use here is that for any rank-2 hypergraph $H$, a proper coloring of the graph $D_h$ is also a proper coloring of the hypergraph $H$. Indeed, since $H$ is rank-2, any hyperedge $e \in E(H)$ with $|e| \geq 2$ contains an edge of $D_H$, and thus, is not monochromatic in the coloring.

\medskip

It is easy to see that by the way we defined the enlarged hypergraph $\hat{H}$, both $\hat{H}$ and any induced sub-hypergraph $H' \subset \hat{H}$ are rank-2 hypergraphs.
%(Essentially, this holds since for any $e_d \in E(H')$ the enlargement of $H$ contains the pseudo-discs that result from shrinking of $d$ to points $x_{bd}$, for any $b \in e_d$).
Hence, it will be sufficient to show that for any sub-hypergraph $H' \subset \hat{H}$, the Delauney graph $D_{H'}$ is \emph{planar}, and hence, 4-colorable~\cite{AH77a,AH77b}. By Theorem~\ref{Lemma:Proper-to-CF} this will imply that $\hat{H}$ admits a CF-coloring with $O(\log p)$ colors, and thus, $H$ admits a CF-coloring with $O(\log p)$ colors, as asserted.

\medskip

The fact that for any $H' \subset \hat{H}$, the graph $D_{H'}$ is planar, follows at once from another lemma proved in~\cite{Pin14}:
\begin{lemma}\label{Lemma:Rom2}
Let $B$ be a family of pairwise disjoint closed connected sets in $\Re^2$, and let $\F$ be a family of pseudo-discs. Define a graph $G'$ whose vertices correspond to the sets in $B$, where two sets $b,b' \in B$ are connected by an edge if there exists a set $d \in \F$ such that the only elements of $B$ that $d$ intersects are $b,b'$ (i.e., $\{b'' \in B: b'' \cap d \neq \emptyset\} = \{b,b'\}$). Then $G'$ is planar.
\end{lemma}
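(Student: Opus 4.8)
The plan is to exhibit an explicit crossing-free drawing of $G'$ in the plane, since any graph that can be drawn with its edges as pairwise non-crossing Jordan arcs is planar. First I would fix, for every set $b \in B$, an arbitrary point $p_b \in b$; these will be the images of the vertices. For every edge $\{b,b'\}$ of $G'$ I would fix a witnessing pseudo-disc $d \in \F$ with $\{b'' \in B : b'' \cap d \neq \emptyset\} = \{b,b'\}$, pick points $q_b \in b \cap d$ and $q_{b'} \in b' \cap d$, and draw the edge as the concatenation of (i) an arc from $p_b$ to $q_b$ inside the connected set $b$, (ii) an arc from $q_b$ to $q_{b'}$ inside the path-connected region $d$, and (iii) an arc from $q_{b'}$ to $p_{b'}$ inside $b'$. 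Because the members of $B$ are pairwise disjoint and the parts (i), (iii) of every edge stay inside their own endpoint sets, while the middle part (ii) of the edge for $\{b,b'\}$ meets $\bigcup B$ only inside $b \cup b'$ (as $d$ touches no other member of $B$), two edges can meet only in the complement of $\bigcup B$, and such a crossing must lie in the intersection of the two witnessing pseudo-discs. This localizes all potential crossings.

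The heart of the argument is removing these crossings, and this is the step I expect to be the main obstacle. The key geometric input is the pseudo-disc property: the boundaries of any two witnesses $d$ and $d'$ meet in at most two points, which forces $d \setminus d'$ to be connected (a single lens is cut off along one boundary arc, or one disc is nested in the other, leaving an annulus). Take two edges $\{b_1,b_2\}$ and $\{b_3,b_4\}$ with all four sets distinct, witnessed by $d$ and $d'$: since $d'$ witnesses $\{b_3,b_4\}$ it is disjoint from $b_1$ and from $b_2$, so the bites $b_1 \cap d$ and $b_2 \cap d$ lie in the connected set $d \setminus d'$. Hence the middle arc of $\{b_1,b_2\}$ can be routed inside $d \setminus d'$ so as to avoid $d'$ entirely, and therefore avoid the middle arc of $\{b_3,b_4\}$. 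Pairs of edges sharing a common vertex $b$ are handled separately, by routing all arcs incident to the point $p_b$ in disjoint angular sectors around $p_b$.

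It remains to globalize this pairwise uncrossing into a single drawing. I would consider, among all drawings in which each edge is routed through its own witness disc as above, one minimizing the total number of crossings, and argue that any surviving crossing can be eliminated by the local re-routing inside a single witness disc without creating new crossings, contradicting minimality. The delicate point — where most of the work lies — is controlling the simultaneous interaction of many witness discs inside one $d$: one must verify that re-routing the middle arc of an edge to avoid one obstructing disc does not force it to cross others, i.e.\ that the discs meeting $d$ behave like a pseudo-disc sub-arrangement of $d$ in which the two entry points remain connectable after deleting the obstructions.

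To tame this arrangement I would first apply Lemma~\ref{Lemma:Rom1} to shrink every witness to a \emph{minimal} pseudo-disc still meeting both of its endpoint sets, while keeping $\F$ a family of pseudo-discs throughout. This reduces each witness to an essentially arc-like bridge between its two sets, drastically simplifying the free region in which crossings can occur and making the ordered, edge-by-edge re-routing inside the minimized discs tractable. I expect that with this normalization the minimal-crossing drawing has no crossings at all, yielding the desired planar embedding of $G'$.
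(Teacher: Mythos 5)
Your initial drawing and the localization of potential crossings to $d \cap d'$ for independent edges are correct, but the argument has a genuine gap exactly where you flag it: the globalization of the pairwise uncrossing. The pseudo-disc property gives you that $d \setminus d'$ is connected for a \emph{single} obstructing witness $d'$, but the middle arc of the edge $\{b_1,b_2\}$ must simultaneously avoid the union of \emph{all} witnesses of edges disjoint from $\{b_1,b_2\}$, and $d$ minus a union of several pseudo-discs can be disconnected (two witnesses can cut across $d$ like parallel bars), with nothing in the hypotheses preventing $q_{b_1}$ and $q_{b_2}$ from landing in different components. Your fallback --- take a crossing-minimal drawing and argue that a local re-routing inside one witness decreases the crossing count --- is precisely the statement that needs proof, and neither that claim nor the connectivity of $d$ ``after deleting the obstructions'' is established; the appeal to Lemma~\ref{Lemma:Rom1} to shrink witnesses does not repair this, since a shrunk witness still has positive area and the separation phenomenon persists. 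A secondary issue: for an honest crossing-free embedding you must also uncross adjacent edges, and ``disjoint angular sectors around $p_b$'' is not meaningful when $b$ is an arbitrary closed connected set.

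Note that the paper does not prove this lemma at all (it is quoted from Pinchasi~\cite{Pin14}); but in the closely analogous planarity claims it does prove (Claims~\ref{Claim:G_Iplanar} and~\ref{Claim:G_Iplanar2}) it sidesteps your obstacle entirely via the Hanani-Tutte theorem: planarity already follows from a drawing in which every two \emph{independent} edges cross an \emph{even} number of times, so no uncrossing is needed. Your own first drawing essentially achieves this: the only possible intersections between independent edges are between the two middle arcs; the middle arc of $\{b_1,b_2\}$ lies in $d$ with both endpoints in $d \setminus d'$ (since $b_1,b_2$ are disjoint from $d'$), the middle arc of $\{b_3,b_4\}$ lies in $d'$ with both endpoints in $d' \setminus d$, and Lemma~\ref{Lemma:BP} then gives even crossing parity for each such pair separately. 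Replacing your uncrossing program by this parity argument (after checking that each three-piece edge can be taken to be a simple arc) turns your construction into a complete proof.
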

For any $H' \subset \hat{H}$, the Delauney graph $D_{H'}$ satisfies the hypotheses of Lemma~\ref{Lemma:Rom2} (with $V(H')$ as the family of disjoint sets and $\widehat{V \setminus B}$ as the family of pseudo-discs). Therefore, by Lemma~\ref{Lemma:Rom2}, $D_{H'}$ is planar, which completes the proof.

\subsubsection{Coloring the vertices of $V \setminus B$}
\label{sec:VminusB}

Let $I=(V(I),E(I))$ be the hypergraph whose vertex set is $V(I)=V \setminus B$ and whose hyperedges are all sets of the form $e_b = \{d \in V \setminus B: d \cap b \neq \emptyset\}$, for all $b \in B$. Clearly, a CF-coloring of $I$ provides any $b \in B$ that has at least one neighbor in $V \setminus B$ with a uniquely-colored neighbor in $N_G(b)$, as desired. (For the remaining vertices in $B$, $N_G(b)$ is empty and so no uniquely-colored neighbor is needed.)

As the proof of CF colorability of $I$ is somewhat involved, we first present the proof in the special case where the $\F$ consists of discs. We then present the adjustments needed for the more general case of pseudo-discs. The arguments in the special case of discs already allows us to derive Theorem~\ref{Thm:Antennas} (see below). In the appendix we present an alternative proof of the CF colorability of $I$, using the results of ~\cite{k-admissible}. While it is somewhat simpler than the proof presented here, it uses two strong results as `black boxes' and does not yield the tools needed to prove Theorem~\ref{Thm:Antennas}, and thus, we prefer to delay it to the appendix.

\medskip

\noindent \textbf{The case of discs}

\medskip

By Theorem~\ref{Lemma:Proper-to-CF}, in order to show that $I$ admits a CF-coloring with $O(\log (n-p))$ colors, it is sufficient to show that there exists a constant $k$ such that any induced sub-hypergraph $I' \subset I$ adimts a proper coloring with $k$ colors. We will show this (for $k=6$) in two steps:
%\begin{enumerate}
%\item

\medskip \noindent (1). For any induced sub-hypergraph $I' \subset I$, we define an auxiliary graph $G_{I'}$, and show that it is planar. So, by Euler's formula, it follows that it has a vertex of degree $\leq 5$.

%\item
\medskip \noindent (2). We use the low-degree vertices of the graphs $G_{I'}$ to define an inductive process that yields a proper coloring of $I$ and any of its induced sub-hypergraphs with 6 colors.
%\end{enumerate}

\medskip \noindent \textbf{Step 1. Studying the auxiliary graphs $G_{I'}$.} For any induced sub-hypergraph $I' \subset I$, we define $G_{I'}$ to be the graph whose vertices are $V(G_{I'})=V(I') \subset \F \setminus B$, and connect two vertices $d_1,d_2 \in V(G_{I'})$ by an edge if there exists $b \in B$ such that the only discs in $V(I')$ which $b$ intersects are $d_1,d_2$ (i.e., $\{d \in V(I'): d \cap b \neq \emptyset\} = \{d_1,d_2\}$).
\begin{claim}\label{Claim:G_Iplanar}
For any $I' \subset I$, the graph $G_{I'}$ is planar, and thus contains a vertex of degree $\leq 5$.
\end{claim}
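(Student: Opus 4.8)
The plan is to prove Claim~\ref{Claim:G_Iplanar} by exhibiting an explicit crossing-free drawing of $G_{I'}$ in the plane. Note that one cannot simply invoke Lemma~\ref{Lemma:Rom2} here: that lemma produces a planar graph whose \emph{vertices} form a pairwise disjoint family, whereas the vertices of $G_{I'}$ are the discs of $V(I')$, which may overlap one another; it is only the witnessing family $B$ that is pairwise disjoint. Applying Lemma~\ref{Lemma:Rom2} with $B$ as the disjoint family would yield planarity of the ``transposed'' graph on vertex set $B$, not of $G_{I'}$. Hence a direct geometric argument, tailored to discs, is required, and this is precisely why the disc case is singled out.

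First I would fix, for every edge $e=(d_1,d_2)$ of $G_{I'}$, a single witness $b_e\in B$, i.e.\ a disc with $\{d\in V(I'):d\cap b_e\neq\emptyset\}=\{d_1,d_2\}$. Place a representative point $p_d$ inside each disc $d\in V(I')$. Since $b_e$ is convex and meets both $d_1$ and $d_2$, I can pick $a_e\in b_e\cap d_1$ and $c_e\in b_e\cap d_2$ so that the whole segment $[a_e,c_e]$ lies in $b_e$; this segment is the ``middle'' of the drawn edge. The edge is then completed by a connector from $p_{d_1}$ to $a_e$ drawn inside $d_1$ and a connector from $p_{d_2}$ to $c_e$ drawn inside $d_2$. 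Two clean observations drive the argument: because the discs of $B$ are pairwise disjoint, the middle segments $[a_e,c_e]$ are pairwise disjoint (distinct edges automatically have distinct witnesses, as a witness determines the unordered pair it meets); and because each $b_e$ meets only its two endpoint discs, the segment $[a_e,c_e]$ avoids every disc of $V(I')$ other than $d_1,d_2$. In particular, the parts of the various middle segments that poke into a fixed disc $d$ (the ``spikes'' reaching the anchor points) are pairwise disjoint, since distinct spikes live in disjoint witnesses.

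The step I expect to be the main obstacle is routing the connectors inside the (possibly overlapping) discs of $V(I')$ so that no two edges cross. Within a single disc $d$ this is manageable: the spikes entering $d$ are disjoint segments with interior endpoints, so they do not disconnect $d$, and one can route non-crossing connectors from $p_d$ to all anchor points inside $d$. The delicate point is that connectors belonging to two \emph{overlapping} discs $d,d'$ both live in the common lens $d\cap d'$ and could cross there. Resolving this cleanly is where the convexity (pseudo-disc) structure is used: the boundaries of any two discs meet in at most two points, so the lens $d\cap d'$ has a simple structure, and the connectors can be routed (hugging their spikes and the relevant boundary arcs) consistently so as to avoid all such crossings. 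Once the drawing is shown to be crossing-free, $G_{I'}$ is planar, and then Euler's formula gives $\cardin{E(G_{I'})}\le 3\cardin{V(G_{I'})}-6$, so $G_{I'}$ has a vertex of degree at most $5$, as claimed.
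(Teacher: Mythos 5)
Your setup is sound: the observation that Lemma~\ref{Lemma:Rom2} gives planarity of the graph on $B$ rather than on $V(I')$ is correct, distinct edges do have distinct (hence disjoint) witnesses in $B$, and each middle segment $[a_e,c_e]$ does avoid every disc of $V(I')$ other than its two endpoints. But the proof has a genuine gap exactly at the step you yourself flag as the main obstacle: routing the connectors inside the (possibly heavily overlapping) discs of $V(I')$ without crossings. The sentence ``the connectors can be routed (hugging their spikes and the relevant boundary arcs) consistently so as to avoid all such crossings'' is an assertion, not an argument, and it is where the entire difficulty of the claim lives. The problem is real: if $p_d$ happens to lie inside a neighbor $d'$ (and when many discs mutually overlap, $p_d$ cannot avoid all of them), a connector from $p_d$ to an anchor $a_e\notin d'$ must pass through the lens $d\cap d'$, where it meets connectors of $d'$; these belong to \emph{independent} edges, so such crossings cannot be dismissed, and nothing in your write-up controls their number or parity. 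In effect you have reduced planarity of $G_{I'}$ to an unproven routing claim that is essentially equivalent to it.

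The paper avoids this trap by not attempting a crossing-free drawing at all. It invokes the Hanani--Tutte theorem, so only \emph{independent} edges need to cross an even number of times, and then draws the edge $(d_1,d_2)$ as the two segments $[Cent(d_1),Cent(b)]$ and $[Cent(b),Cent(d_2)]$ through the centers. For two independent edges with witnesses $b,b'$ one picks $x\in d_1\cap b$ and $y\in d_1'\cap b'$ and checks, using that each witness meets no disc of $V(I')$ other than its two endpoints, that all four centers of one edge are strictly closer to $x$ than to $y$ and all four of the other are strictly closer to $y$ than to $x$; the perpendicular bisector of $[x,y]$ then separates the two drawn edges entirely, so independent edges cross zero times. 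Adjacent edges may cross, but Hanani--Tutte does not care. If you want to salvage your construction, you would need either an explicit, verified routing scheme inside the lenses, or to switch to a parity argument of this kind; as written, the claim is not established.
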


\begin{proof}
We present the proof for $I'=I$. It will be clear from the proof that the same argument applies for any induced sub-hypergraph $I' \subset I$.
By the classical Hanani-Tutte theorem \cite{Tutte70} (see also
\cite{CH34}), a graph is planar if and only if it admits a plane drawing in which any two edges that do not share a vertex intersect an even number of times. We will present such a drawing of $G_I$.

For each disc $d$, we denote the center of $d$ by $Cent(d)$. For any $(d_1,d_2) \in E(G_I)$, let $b \in B$ denote a disc such that $\{d \in \F \setminus B: d \cap b \neq \emptyset\} = \{d_1,d_2\}$. Connect $Cent(d_1)$ by a straight line segment to $Cent(b)$ and connect $Cent(b)$ by a straight line segment to $Cent(d_2)$. The polygonal line that connects $Cent(d_1)$ with $Cent(d_2)$ (through $Cent(b)$), which we denote $e_{d_1,d_2}$, will be the drawing of the edge $(d_1,d_2)$.

By the Hanani-Tutte theorem, it is sufficient to prove that for any distinct $d_1,d_2,d'_1,d'_2$, the edges $e_{d_1,d_2}$ and $e_{d'_1,d'_2}$ do not intersect. Denote the discs of $B$ used in the drawing of the edges $e_{d_1,d_2}$ and $e_{d'_1,d'_2}$ by $b,b'$ (respectively). By symmetry, it is sufficient to show that the segment $[Cent(d_1),Cent(b)]$ does not intersect the segment $[Cent(d'_1),Cent(b')]$.

Let $x \in d_1 \cap b$ and $y \in d'_1 \cap b'$. Let $\ell$ be the perpendicular bisector of the segment $[x,y]$. We claim that the segments $[Cent(d_1),Cent(b)]$ and $[Cent(d'_1),Cent(b')]$ lie on different sides of $\ell$, and thus they cannot intersect. See Figure~\ref{fig:circles1} for an illustration.

\begin{figure}[tb]
\begin{center}
\scalebox{0.6}{
\includegraphics{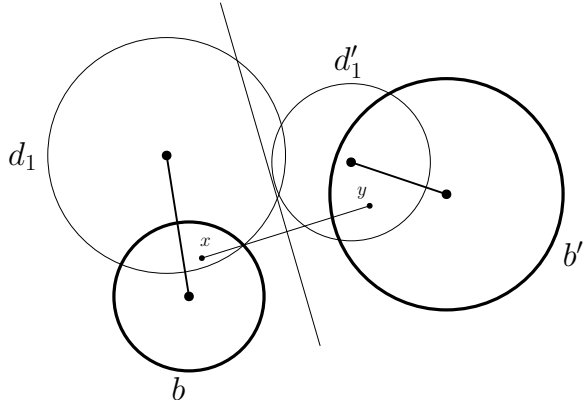}
} \caption{Illustration of the proof of Claim~\ref{Claim:G_Iplanar}} \label{fig:circles1}
\end{center}
\end{figure}

Observe that since $x \in d_1$ and $y \not \in d_1$, then $Cent(d_1)$ is closer to $x$ than to $y$.
%(as otherwise it would follow that $y \in d_1$ and thus $d_1 \cap b' \neq \emptyset$, which contradicts the definition of $b'$).
Similarly, since $x \in b$ and $y \not \in b$, then $Cent(b)$ is closer to $x$ than to $y$.
 %(as otherwise it would follow that $y \in b$ and thus $d'_1 \cap b \neq \emptyset$, which contradicts the definition of $b$).
  Hence, both $Cent(d_1)$ and $Cent(b)$ lie on the same side of $\ell$ as $x$, which implies that the entire segment $[Cent(d_1),Cent(b)]$ lies on the same side of $\ell$ as $x$. For the same reason, the entire segment $[Cent(d'_1),Cent(b')]$ lies on the same side of $\ell$ as $y$. Therefore, the segments do not intersect, which completes the proof.
\end{proof}

%\begin{corollary}\label{Cor:Deg5}
%Any induced subgraph of $G_I$ contains a vertex of degree $\leq 5$.
%\end{corollary}
%
%\begin{proof}
%Let $G'$ be an induced subgraph of $G_I$ with $|V(G')|=m$. By Claim~\ref{Claim:G_Iplanar}, $G'$ is planar, and thus, by Euler's formula, $|E(G')| \leq 3m-6$. The assertion now follows by the pigeonhole principle.
%\end{proof}

\medskip \noindent \textbf{Step 2. Coloring any sub-hypergraph of $I$ with 6 colors.} We show how to properly color $I$ with $6$ colors. It will be clear from the proof that the same argument applies to any induced sub-hypergraph of $I$.

We define a linear ordering on the vertices of $I$ as follows. By Claim~\ref{Claim:G_Iplanar}, $G_I$ contains a vertex of degree $\leq 5$. We choose (arbitrarily) one such vertex $v_1$, and put it as the \emph{last} in that ordering. Then we consider the induced sub-hypergraph $I_1$ of $I$ whose vertex set is $V(I) \setminus \{v_1\}$ and consider the corresponding auxiliary graph $G_{I_1}$. As before, by Claim~\ref{Claim:G_Iplanar} it has a vertex of degree $\leq 5$. We choose such a vertex $v_2$ and put in the second-to-last place of the ordering. We continue in this fashion until we obtain a full ordering $\{v_m,v_{m-1},\ldots,v_2,v_1\}$ of the vertices of $I$ (note that the vertex set of $G_I$ is equal to the vertex set of $I$).

Now, we define the coloring of $I$ inductively. The vertices are colored one-by-one according to the ordering. The first vertex ($v_m$) is given an arbitrary color. Assume that all vertices $v_m,v_{m-1},\ldots,v_{i+1}$ are already colored and consider the vertex $v_i$. Let $I_i$ be the induced sub-hypergraph of $I$ whose vertex set is $\{v_m,v_{m-1},\ldots,v_{i+1},v_i\}$, and let $G_{I_i}$ be the corresponding auxiliary graph. By construction, the degree of $v_i$ in $G_{I_i}$ is at most 5. Hence, we can assign to $v_i$ a color that is not used by any of its neighbors in $G_{I_i}$. We choose arbitrarily one of these `available' colors and assign it to $v_i$. We claim that the resulting coloring is proper.

Indeed, assume to the contrary that the coloring is not proper. Hence, there is a step $i$ such that the color assignment to $v_i$ created a monochromatic hyperedge $e_b$ with $|e_b| \geq 2$. Let $i$ be the earliest stage when such an event occurred. For any $e_b$ of size $\geq 3$, the edge $e_b \setminus \{v_i\}$ was not monochromatic by the minimality of $i$, and thus, $e_b$ also cannot be monochromatic. The same applies for any $e_b$ of size $2$ that does not contain $v_i$. The only remaining case is hyperedges $e_b$ of the form $e_b=\{v_i,v_j\}$ where $j>i$. For such an hyperedge, $(v_i,v_j)$ is an edge in the auxiliary graph $G_{I_i}$. Thus, by the coloring rule, the color of $v_i$ differs from the color of $v_j$, which means that $e_b$ is not monochromatic, a contradiction.

Therefore, we obtained a proper coloring of $I$ with 6 colors, and as mentioned above, a proper coloring of any induced sub-hypergraph of $I$ with 6 colors can be obtained in the same way. By Theorem~\ref{Lemma:Proper-to-CF}, this implies that $I$ admits a CF-coloring with $O(|V(I)|)=O(\log (n-p))$ colors, as asserted.
This completes the proof of Theorem~\ref{Thm:Pseudo-discs} in the easier case where the elements of $\F$ are \emph{discs} (rather than pseudo-discs).

\begin{remark}
We note that, with some extra effort involving additional geometric ideas, one can construct a proper coloring of any induced sub-hypergraph of $I$ with only 4 colors, which is obtained directly from the proper coloring of $G_I$. We do not present this improvement as its effect on the bound for CF-coloring is asymptomatically insignificant.
\end{remark}

\noindent \textbf{ Proof in the general case}

\medskip

We return to the general case where $\F$ is a family of pseudo-discs. The general strategy of the proof is similar to the case of discs.

By Theorem~\ref{Lemma:Proper-to-CF}, in order to show that $I$ admits a CF-coloring with $O(\log (n-p))$ colors, it is sufficient to show that there exists a constant $k$ such that any induced sub-hypergraph $I' \subset I$ admits a proper coloring with $k$ colors. We will show this (this time, for $k=7$) in three steps (where the latter two steps are similar to the case of discs, though with a somewhat more involved proof).
%\begin{enumerate}
%\item

\medskip \noindent (1). We prune the family $\F \setminus B$ by removing some of the pseudo-discs such that each of the remaining pseudo-discs contains a point of depth 1 (i.e., a point that is contained in a single pseudo-disc). (All removed pseudo-discs will be colored with a single additional color.) We denote the resulting family by $\bar{\F}$ and the corresponding hypergraph by $\bar{I}$. Note that each of the removed pseudo-discs is covered by the union of the remaining ones in $\bar{\F}$.

%\item
\medskip \noindent (2). For any induced sub-hypergraph $I' \subset \bar{I}$, we define an auxiliary graph $G_{I'}$ like in the case of discs, show that it is planar (which is the most complex part of the argument), and deduce that it has a vertex of degree $\leq 5$.

%\item
\medskip \noindent (3). We use the low-degree vertices of the graphs $G_{I'}$ to define an inductive process that yields a proper coloring of $I$ and any of its induced sub-hypergraphs with 7 colors (where 6 colors are needed for $\bar{I}$ and an additional color is needed for the pseudo-discs removed at the beginning).
%\end{enumerate}

\medskip \noindent \textbf{Step 1. Pruning  $\F \setminus B$.} First, we order the elements of $\F \setminus B$ in an arbitrary ordering $d_1,d_2,\ldots,d_m$ (where $m=n-p=|\F \setminus B|$). Then, we go over the elements, denote $\F_0=\F \setminus B$, and define a sequence of subfamilies $\F_1,\F_2,\ldots \subset \F \setminus B$ as follows. At the $i$'th step, we check whether $d_i$ contains a point that is of depth 1 in the family $\F_{i-1}$ (i.e., whether there exists $x \in d_i$ such that $x \not \in d'$ for all $d \neq d' \in \F_{i-1}$). If it does not contain such point, we define $\F_{i}=\F_{i-1} \setminus \{d_i\}$ (that is, we remove the disc $d_i$ from the family). Otherwise, we define $\F_i=\F_{i-1}$.

At the end of the process, we remain with a subfamily $\bar{\F} \subset \F \setminus B$ that satisfies the following two properties:
%\begin{enumerate}
%\item

\medskip \noindent (1). Each $d \in \bar{\F}$ contains a point of depth 1 (with respect to $\bar{\F}$).

%\item
\medskip \noindent (2).We have $\bigcup_{\{d \in \bar{\F}\}} d = \bigcup_{\{d' \in \F \setminus B\}} d'$.
%\end{enumerate}
\medskip We define $\bar{I}=(V,E)$ as the hypergraph whose vertex set is $V= \bar{F}$ and whose hyperedges are all sets of the form $e_b = \{d \in \bar{F}: d \cap b \neq \emptyset\}$, for all $b \in B$.

The `removed' pseudo-discs will be colored at a later stage.

\medskip \noindent \textbf{Step 2. Studying the auxiliary graphs $G_{I'}$ for $I' \subset \bar{I}$.}
Like in the case of discs, for any induced sub-hypergraph $I' \subset \bar{I}$, we define $G_{I'}$ to be the graph whose vertices are $V(G_{I'})=V(I') \subset \bar{\F}$, and connect two vertices $d_1,d_2 \in V(G_{I'})$ by an edge if there exists $b \in B$ such that the only discs in $V(I')$ which $b$ intersects are $d_1,d_2$ (i.e., $\{d \in V(I'): d \cap b \neq \emptyset\} = \{d_1,d_2\}$).
\begin{claim}\label{Claim:G_Iplanar2}
For any $I' \subset \bar{I}$, the graph $G_{I'}$ is planar, and thus contains a vertex of degree $\leq 5$.
\end{claim}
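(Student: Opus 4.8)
The plan is to follow the same high-level strategy as in the disc case: exhibit a drawing of $G_{I'}$ in the plane and invoke the Hanani--Tutte theorem, showing that any two independent (vertex-disjoint) edges cross an even number of times. The difficulty is that the disc-case drawing relies on centers and perpendicular bisectors, which have no analogue for pseudo-discs. The substitute for the center of $d$ will be a point $q_d \in d$ of depth $1$ with respect to $\bar{\F}$, which exists precisely because of the pruning in Step 1, and the substitute for the straight segments will be curves routed inside the relevant pseudo-discs.

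Concretely, I would draw each vertex $d \in V(I')$ at its depth-$1$ point $q_d$ (these points are distinct, and each lies in no other member of $V(I') \subseteq \bar{\F}$). For an edge $(d_1,d_2)$ witnessed by $b \in B$ --- so that $b$ meets exactly $d_1,d_2$ among $V(I')$ --- I would pick points $x_1 \in b \cap d_1$ and $x_2 \in b \cap d_2$ and draw the edge as the concatenation of three arcs: an arc $\gamma_1 \subseteq d_1$ from $q_{d_1}$ to $x_1$, an arc $\gamma_b \subseteq b$ from $x_1$ to $x_2$, and an arc $\gamma_2 \subseteq d_2$ from $x_2$ to $q_{d_2}$. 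Each arc exists because pseudo-discs are simply connected, and the arcs inside a fixed $d$ coming from its various incident edges can be routed as a non-crossing star emanating from $q_d$.

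The heart of the argument is the crossing analysis for two independent edges $\gamma = \gamma_1 \cup \gamma_b \cup \gamma_2$ and $\gamma' = \gamma_1' \cup \gamma_{b'} \cup \gamma_2'$, where $d_1,d_2,d_1',d_2'$ are all distinct. First I would eliminate most piece-pairs: $b$ and $b'$ are disjoint members of $B$ (so $\gamma_b,\gamma_{b'}$ cannot cross), and since $b'$ meets no member of $V(I')$ other than $d_1',d_2'$ we have $b' \cap d_1 = b' \cap d_2 = \emptyset$, so $\gamma_{b'}$ cannot cross $\gamma_1$ or $\gamma_2$ (and symmetrically for $\gamma_b$). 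Hence all crossings occur between a $d$-arc of $\gamma$ and a $d$-arc of $\gamma'$, inside a region $d_i \cap d_j'$. Now the key observation is that both endpoints of $\gamma_1$ lie outside $d_1'$ and $d_2'$: indeed $q_{d_1}$ has depth $1$ so it lies in no other member of $V(I')$, while $x_1 \in b$ lies in neither $d_1'$ nor $d_2'$ since $b$ meets only $d_1,d_2$ in $V(I')$; the same holds for $\gamma_2$ and, symmetrically, for $\gamma_1',\gamma_2'$. Because the depth-$1$ property forbids containment between distinct members of $V(I')$, each nonempty $d_i \cap d_j'$ is a lens bounded by one sub-arc of $\partial d_i$ and one sub-arc of $\partial d_j'$. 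Inside this lens the excursions of $\gamma_i \subseteq d_i$ enter and leave only through the $\partial d_j'$-side, while the excursions of $\gamma_j'$ enter and leave only through the $\partial d_i$-side; a standard parity argument (two chords of a disc with endpoints on complementary boundary arcs meet an even number of times) then shows $\gamma_i$ and $\gamma_j'$ cross evenly. Summing over the four pairs $(i,j)$ gives an even total, as required.

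I expect this lens-parity step, together with the bookkeeping that isolates where crossings can occur, to be the main obstacle; everything else is a routine adaptation of the disc case. Given the even-crossing property, the Hanani--Tutte theorem yields that $G_{I'}$ is planar, and since $G_{I'}$ is a simple planar graph, Euler's formula guarantees a vertex of degree at most $5$. As the construction uses only the witness structure of $V(I')$, it applies verbatim to every induced sub-hypergraph $I' \subseteq \bar{I}$.
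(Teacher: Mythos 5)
Your proposal is correct and follows essentially the same route as the paper: anchor each pseudo-disc at a depth-$1$ point guaranteed by the pruning step, route each edge through its witness $b \in B$, use the pairwise disjointness of $B$ together with the fact that $b'$ meets no member of $V(I')$ besides $d_1',d_2'$ to confine all possible crossings to pairs of $d$-arcs, and then show those cross evenly because all four relevant endpoints lie in the appropriate set differences ($d_i\setminus d_j'$ and $d_j'\setminus d_i$). The one substantive difference is the final parity step: the paper invokes Lemma~\ref{Lemma:BP} from~\cite{BPR13} as a black box, whereas you re-derive it via the lens decomposition of $d_i \cap d_j'$ (the depth-$1$ property correctly rules out containment, so a nonempty intersection of two pseudo-discs is a single lens whose two boundary arcs are entered by the respective excursions from complementary sides); this makes the argument self-contained at the cost of some general-position bookkeeping. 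The only point where you are more casual than the paper is in ensuring that each drawn edge is a simple Jordan arc --- the paper's three-case construction of $p_1\cup p_2\cup p_3$ exists precisely to guarantee simplicity for the Hanani--Tutte theorem --- but this is a repairable technicality that does not affect the parity count.
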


\begin{proof}
We present the proof for $I'=\bar{I}$. It will be clear from the proof that the same argument applies for any induced sub-hypergraph $I' \subset \bar{I}$. Also, since any planar graph has a vertex of degree $\leq 5$ as mentioned before, we will be done by proving that $G_{\bar{I}}$ is planar. Moreover, by the Hanani-Tutte theorem it will be sufficient to present a plane drawing of $G_{\bar{I}}$ in which any two edges that do not share a vertex intersect an even number of times. We will present now such a drawing of $G_{\bar{I}}$. A crucial fact we use here is that for any $d',d'' \subset \bar{\F}$, both $d' \setminus d''$ and $d' \cap d''$ are connected, since $\bar{\F}$ is a family of pseudo-discs.

For each pseudo-disc $d$, we choose a point of depth 1 in $d$ and call it $x_d$. (Note that by the construction of $\bar{I}$, any $d \in \bar{F}$ contains a point of depth 1.) For any $(d',d'') \in E(G_{\bar{I}})$, let $b \in B$ be a pseudo-disc such that $\{d \in \bar{\F}: d \cap b \neq \emptyset\} = \{d',d''\}$. We define a simple Jordan curve from $x_{d'}$ to $x_{d''}$ via $b$ that will be the drawing of the edge $(d',d'')$ in the following way:
\begin{itemize}
\item \textbf{Case 1: $d' \cap \partial b \not \subset d''$.} Since $d' \setminus d''$ is connected and intersects $b$, there exists $y' \in (d' \setminus d'') \cap \partial b$ and a simple Jordan curve $p_1$ in $d' \setminus d''$ from $x_{d'}$ to $y'$ that intersects $b$ only in $y'$. Likewise, there exists a simple Jordan curve $p_3$ in $d''$ from $x_{d''}$ to some $y'' \in d'' \cap \partial b$ that intersects $b$ only in $y''$. Furthermore, there exists a Jordan curve $p_2$ in $\partial b$ from $y'$ to $y''$. The drawing $e_{d',d''}$ of the edge $(d',d'')$ will be $p_1 \cup p_2 \cup p_3$. (Note that, by construction, this curve is simple.)

\item \textbf{Case 2: $d'' \cap \partial b \not \subset d'$.} This case is handled like the previous case, with the roles of $d',d''$ interchanged.

\item \textbf{Case 3: $d' \cap \partial b = d'' \cap \partial b$.} Pick $y' \in d' \cap d'' \cap \partial b$. Let $\hat{x_{d'}}$ ($\hat{x_{d''}}$) be the point in $d' \cap d''$ which is the closest to $x_{d'}$ (respectively, $x_{d''}$). Let $p_1$ be a simple curve in $d' \setminus d''$ from $x_{d'}$ to $\hat{x_{d'}}$, and let $p_3$ be a simple curve in $d'' \setminus d'$ from $x_{d''}$ to $\hat{x_{d''}}$. Since $\hat{x_{d'}},\hat{x_{d''}}$, and $y'$ belong to the same connected set $d' \cap d''$, we can connect $\hat{x_{d'}}$ to $\hat{x_{d''}}$ via $y'$ by a simple Jordan curve $p_2$. The drawing $e_{d',d''}$ of the edge $(d_1,d_2)$ will be $p_1 \cup p_2 \cup p_3$. This curve is indeed simple since each of the curves $p_1,p_2,p_3$ is simple and they belong to disjoint sets ($d' \setminus d''$, $d' \cap d''$, and $d'' \setminus d'$, respectively).
\end{itemize}

We claim that for any distinct $d',d'',f',f''$, the curves $e_{d',d''}=p_1 \cup p_2 \cup p_3$ and $e_{f',f''}= p'_1 \cup p'_2 \cup p'_3$ intersect an even number of times (and thus, by the Hanani-Tutte theorem, the graph $G_{\bar{I}}$ is planar). We will use the following lemma proved in~\cite{BPR13}.
\begin{lemma}[~\cite{BPR13}]\label{Lemma:BP}
Let $d',f'$ be two pseudo-discs in the plane. Let $x,y \in d' \setminus f'$ and let $a,b \in f' \setminus d'$. Let $e_1$ be a Jordan arc connecting $x$ and $y$ that is fully contained in $d'$, and let $e_2$ be Jordan arc connecting $a$ and $b$ that is fully contained in $f'$. Then $e_1$ and $e_2$ cross an even number of times.
\end{lemma}

Denote the pseudo-discs of $B$ used in the drawing of the edges $e_{d',d''}$ and $e_{f',f''}$ by $b_1,b_2$ (respectively).

The parts of $e_{d',d''}$ and $e_{f',f''}$ that lie on $\partial b_1$ and $\partial b_2$, respectively, do not intersect since $b_1 \cap b_2 = \emptyset$. (Note that unlike the case of discs, here we do use the fact that the elements of $B$ are mutually disjoint). Hence, without loss of generality it is sufficient to prove that the part of $e_{d',d''}$ from the point $x_{d'}$ until the first intersection with $\partial b_1$ (in Case 1, this is exactly $p_1$; in Case 3, this is $p_1$ and the first part of $p_2$) intersects the part of $e_{f',f''}$ from the point $x_{f'}$ until the first intersection with $\partial b_2$ an even number of times. Denote these parts of $e_{d',d''}$ and $e_{f',f''}$ by $e_1, e'_1$, respectively. Since $f' \cap b_1 = d' \cap b_2 = \emptyset$, and since the points $x_{d'},x_{f'}$ are of depth 1, we get that both endpoints of $e_1$ belong to $d' \setminus f'$, while both endpoints of $e'_1$ belong to $f' \setminus d'$. Therefore, by Lemma~\ref{Lemma:BP}, $e_1$ and $e'_1$ intersect an even number of times. This completes the proof.
\end{proof}

\medskip \noindent \textbf{Step 3. Coloring any sub-hypergraph of $\bar{I}$ with 7 colors.} This step is similar to the last step of the proof in the case of discs. The only difference is that all pseudo-discs that were removed during the `pruning' step are colored in an additional (7th) color. The proof presented in the case of discs applies verbatim to show that the coloring does not contain a monochromatic hyperedge in any of the first six colors. To complete the proof we note that there is no monochromatic edge colored 7, as existence of such an edge would contradict condition~2 that $\bar{F}$ satisfies at the end of the `pruning' step.

Thus, we proved that $I$, as well as any of its induced subgraphs, admits a proper coloring with 7 colors. By Theorem~\ref{Lemma:Proper-to-CF}, this implies that $I$ admits a CF-coloring with $O(\log |V(I)|) = O(\log (n-p))$ colors. Together with the coloring of $B$ with $O(\log p)$ additional colors presented in the previous section, we obtain $\chi_{CF}^{pn}(G) = O(\log n)$.

\subsection{Proof of Theorems~\ref{Thm:Antennas} and~\ref{Thm:List}}

First we present the proof of Theorem~\ref{Thm:Antennas}. Recall that in the theorem, $\F$ is a family of $n$ discs in the plane. We have to show that $\F$ can be colored by $O(\log n)$ colors in such a way that for any disc $b$, not necessarily in $\F$, the set $S_b = \{D \in \F: D \cap b \neq \emptyset\}$ contains a uniquely-colored element. Moreover, the bound $O(\log n)$ is asymptotically tight.

\medskip

We define a hypergraph $I$ by setting $V(I)=\F$ and taking the hyperedges to be all sets of the form $e_b = \{d \in \F: d \cap b \neq \emptyset\}$, for all discs $b$ (not necessarily in $\F$) in the plane. It is clear that a CF-coloring of $I$ is a coloring of $\F$ that satisfies the assertion of the theorem.

For any $I' \subset I$, we define a graph $G_{I'}$ whose vertices are $V(G_{I'})=V(I')$, and whose edges are all pairs $\{d_1,d_2\}$ such that there exists a disc $b$ for which $\{d \in V(I'): d \cap b \neq \emptyset\}=\{d_1,d_2\}$.

The proof of Claim~\ref{Claim:G_Iplanar} applies verbatim to show that for any $I' \subset I$, the graph $G_{I'}$ is planar. (Note that while in Claim~\ref{Claim:G_Iplanar} it is assumed that the discs of $B$ are pairwise disjoint, the proof does not use this assumption. The assumption is used only in the more complex case of pseudo-discs handled in Claim~\ref{Claim:G_Iplanar2}.) As shown above, this implies that $I$, as well as any of its induced sub-hypergraphs, admits a proper coloring with $6$ colors. Therefore, by Theorem~\ref{Lemma:Proper-to-CF}, $\chi_{CF}(I)=O(\log n)$, as asserted.
The tightness of the assertion follows from the tightness of Theorem~\ref{thm:ELRS1} showed in~\cite{ELRS} (as Theorem~\ref{thm:ELRS1} corresponds to a special case of Theorem~\ref{Thm:Antennas}).

\medskip

%{\noindent \bf List coloring:}

In order to present the proof of Theorem~\ref{Thm:List}, we need a few definitions.

A hypergraph $H=(V,E)$ with $V =\{v_1,v_2,\ldots,v_n\}$ is $k$-CF-choosable if for any family of $n$ lists $L_1,L_2,\ldots,L_n$ such that $|L_i| \geq k$ for all $i$, there exists a CF-coloring $\varphi$ of $H$ such that $\varphi(v_i) \in L_i$ for all $1 \leq i \leq n$. Such coloring is called a \emph{list coloring}. $\mathrm{ch}_{CF}(H)$ is the least integer $k$ such that $H$ is $k$-CF-choosable. Note that we always have $\chi_{CF}(H) \leq \mathrm{ch}_{CF}(H)$ since a CF-coloring of $H$ with $k$ colors corresponds to the special case of $k$-choosability in which $L_1=L_2=\ldots=L_n=\{1,2,\ldots,k\}$.

Theorem~\ref{Thm:List} follows immediately from the above argument, via the following theorem of Cheilaris et al.~\cite{CSS11}.
\begin{theorem}\label{Thm:CSS}
Let $H$ be a hypergraph. If any induced sub-hypergraph $H' \subset H$ admits a proper coloring with a constant number $k$ of colors, then $\mathrm{ch}_{CF}(H) \leq 1+ \log_{1+\frac{1}{k-1}} n = O(\log n)$.
\end{theorem}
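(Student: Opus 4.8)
The plan is to read Theorem~\ref{Thm:CSS} as the list-coloring strengthening of Theorem~\ref{Lemma:Proper-to-CF} and to mirror the proof of the latter, replacing the use of a single fresh color per round by choices drawn from the prescribed lists. Concretely, I would argue by induction on $n=|V(H)|$, proving the following: if every induced sub-hypergraph of $H$ is properly $k$-colorable and every vertex $v$ carries a list $L_v$ with $|L_v|\ge 1+\log_{1+\frac{1}{k-1}}n$, then a CF-list-coloring exists. The engine of the induction is the same greedy extraction that underlies Theorem~\ref{Lemma:Proper-to-CF}: properly color $H$ with $k$ colors and let $C$ be a largest color class, so that $|C|\ge n/k$ and hence $|V\setminus C|\le \frac{k-1}{k}n$.

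The budget bookkeeping is exactly what makes the bound come out right. Since $\frac{k-1}{k}=\bigl(1+\frac{1}{k-1}\bigr)^{-1}$, we have
\[
1+\log_{1+\frac{1}{k-1}}|V\setminus C| \le \Bigl(1+\log_{1+\frac{1}{k-1}}n\Bigr)-1,
\]
so the induced sub-hypergraph $H[V\setminus C]$ satisfies the hypotheses of the inductive statement with the required list size reduced by one; in particular the vertices of $V\setminus C$, which still carry lists of size at least $1+\log_{1+\frac{1}{k-1}}n$, retain a \emph{spare} slot. Applying the induction hypothesis to $H[V\setminus C]$ yields a CF-list-coloring $\psi'$ of $H[V\setminus C]$. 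The key structural observation is that, since $C$ is a class of a proper coloring, no hyperedge of size $\ge 2$ is contained in $C$; hence for every such hyperedge $e$ the restriction $e'=e\cap(V\setminus C)$ is a non-empty hyperedge of $H[V\setminus C]$ and therefore carries a vertex $u_e$ that is uniquely colored by $\psi'$.

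It then suffices to extend $\psi'$ to $C$ by picking, for each $w\in C$, a color from $L_w$ in such a way that the uniquely colored vertex $u_e$ survives in the full edge $e$; that is, so that no $w\in e\cap C$ receives the color $\psi'(u_e)$. Spending the reserved spare slot to do this is the main obstacle. In the non-list setting of Theorem~\ref{Lemma:Proper-to-CF} the extension is free: one colors all of $C$ with a single brand-new color $\star$, which is fresh (so it cannot equal any $\psi'(u_e)$) and simultaneously makes every hyperedge meeting $C$ in exactly one vertex conflict-free on its own. With prescribed lists no common fresh color need exist, so the extension must be carried out while respecting, for each $w\in C$, the avoidance constraints $\psi(w)\neq\psi'(u_e)$ coming from \emph{all} hyperedges $e\ni w$. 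I expect the bulk of the work, and the only genuinely delicate point, to lie precisely here: the choice of $\psi'$ must be coordinated with the coloring of $C$ so that these constraints are simultaneously satisfiable from lists of size $1+\log_{1+\frac{1}{k-1}}n$, which is the technical heart of the argument of Cheilaris et al.~\cite{CSS11}. By contrast, the counting of colors above and the structural observation about $C$ are routine.
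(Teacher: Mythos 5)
First, a point of reference: the paper does not prove Theorem~\ref{Thm:CSS} at all --- it is imported as a black box from Cheilaris et al.~\cite{CSS11}, so there is no in-paper proof to compare against. Judged on its own terms, your proposal is not a proof: it correctly sets up the analogy with Theorem~\ref{Lemma:Proper-to-CF} (extract a largest class $C$ of a proper $k$-coloring, note $|V\setminus C|\le\frac{k-1}{k}n$, verify that the logarithmic budget drops by exactly one per level, and observe that no hyperedge of size $\ge 2$ lies inside $C$), but you then explicitly defer the one step that carries all the content, namely extending the recursively obtained list-coloring $\psi'$ of $V\setminus C$ to $C$. That step is a genuine obstruction, not a routine verification: a vertex $w\in C$ may lie in arbitrarily many hyperedges $e$, each forbidding the color $\psi'(u_e)$, and since the lists of distinct vertices of $V\setminus C$ are unrelated, these forbidden colors need not be drawn from a pool of size $O(\log n)$; meanwhile $L_w$ has only $1+\log_{1+\frac{1}{k-1}}n$ entries. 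The ``spare slot'' you reserve lives on the vertices of $V\setminus C$, whereas the coloring problem you are left with lives on $C$, so the bookkeeping, while arithmetically correct, does not pay for the extension.

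The actual argument of~\cite{CSS11} avoids this top-down ``recurse, then extend'' scheme entirely. It runs a round-based algorithm on the set of still-uncolored vertices: in each round every active vertex offers one color from its remaining list, the vertices offering a common color $c$ are properly $k$-colored and only a largest class is permanently assigned $c$, and every vertex that offered $c$ but was not selected deletes $c$ from its list. This guarantees that at least a $1/k$ fraction of active vertices is finalized per round while each survivor loses at most one list entry per round, which is exactly why lists of size $1+\log_{1+\frac{1}{k-1}}n$ suffice; the conflict-free property is then argued round by round rather than by preserving a single designated witness $u_e$ through an extension. So the missing idea in your proposal is precisely the mechanism (discarding rejected colors from lists, rather than reserving a fresh color for $C$) that replaces the ``universal new color'' of the non-list proof; without it, the plan does not close.
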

Define the hypergraph $I$ as above. Since $I$, as well as any of its induced sub-hypergraphs, admits a proper coloring with 6 colors, Theorem~\ref{Thm:CSS} implies $\mathrm{ch}_{CF}(I) = O(\log n)$. The tightness of the assertion follows from the tightness of Theorem~\ref{Thm:Antennas}, as for any $H$,
$\mathrm{ch}_{CF}(I) \geq \chi_{CF}(H)$.

\appendix

\section{Intersection Graph of Axis-Parallel Rectangles}
\label{sec:rectangles}

Here we prove Theorem~\ref{Thm:Rectangles}, namely, that the intersection graph of any family of $n$ axis-parallel rectangles in the plane admits a closed CF-coloring with $O(\log n)$ colors.
The proof consists of two steps. First, we prove a lemma stating that the intersection graph of any family of intervals on a line admits a closed CF-coloring with only 3 colors. Then, we apply this lemma to the case of axis-parallel rectangles in the plane, using the observation that for any family $S$ of axis-parallel rectangles all of which intersect an axis-parallel line $\ell$, the intersection graph of $S$ is isomorphic to the intersection graph of the family of intervals $\{X \cap \ell: X \in S\}$ on the line $\ell$.

\begin{lemma}\label{Lemma:Line}
Let $\F$ be a finite family of of closed intervals on a line $\ell$ and let $G$ be the intersection graph of $\F$. Then $G$ admits a closed CF-coloring with 3 colors.
\end{lemma}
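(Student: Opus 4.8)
The plan is to prove the stronger statement that the intersection graph $G$ of any family $\F$ of closed intervals on a line admits a closed CF-coloring with 3 colors, using the natural linear order induced by the left endpoints (equivalently, by the containment/overlap structure of intervals on a line). First I would reduce to a combinatorial skeleton: sort the intervals by their left endpoints (breaking ties consistently, say by right endpoint), and observe that the closed neighborhood $N_G[v]$ of an interval $v$ is exactly the set of intervals that overlap $v$, i.e.\ the set of intervals whose projection onto $\ell$ meets the projection of $v$. The key geometric feature I would exploit is that intervals on a line have a very rigid Helly-type structure: among any set of pairwise-intersecting intervals there is a common point, and the intersection graph of intervals is chordal (in fact an interval graph). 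I expect the clean route is to use a point-based argument rather than a vertex-based one.

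The main approach I would take is to build the coloring greedily from left to right along $\ell$, exploiting the fact that at any point $x$ the set of intervals containing $x$ forms a clique, and that as $x$ sweeps across $\ell$ the ``active'' clique changes by insertions and deletions. Concretely, I would color the intervals so that at every point $x \in \ell$, among the intervals containing $x$ there is one with a unique color. A cleaner way to organize this is to pick, for each vertex $v$, the requirement that $N_G[v]$ contains a uniquely colored element, and to reduce closed CF-coloring of an interval graph to the easier task by a recursive/peeling argument: repeatedly remove the interval with the rightmost left-endpoint (or use the structure of the clique tree of the interval graph). The step I would carry out next is to show that three colors suffice by a layered scheme — for instance, assign a ``special'' color to a carefully chosen maximal set of intervals that pairwise do not both lie in a common closed neighborhood without a unique representative, and use the remaining two colors to break ties.

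The sharpest and most elegant version of the argument, which I would aim for, is to use the leftmost-endpoint order directly: process intervals left to right, maintaining the invariant that the closed neighborhood of every already-processed interval already contains a uniquely colored vertex. When a new interval $I$ arrives, its closed neighborhood within the processed prefix consists of intervals overlapping $I$, and because these all contain $I$'s left endpoint they pairwise intersect and thus form a clique; the new vertex only needs to guarantee a unique color in the neighborhoods that $I$ newly ``closes off''. The bounded number of colors should follow because the relevant local structure (the set of intervals one must distinguish from at each step) has bounded complexity — the endpoints impose a one-dimensional order, so only a constant number of distinct color classes can simultaneously compete for uniqueness at any single point.

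The hard part, and the place where I expect to spend the most care, is verifying that exactly \emph{three} colors suffice rather than some larger constant, and that the greedy/recursive choice never gets stuck. The subtlety is that the uniqueness condition is not monotone under adding vertices: a color that was unique in a small neighborhood can lose its uniqueness once more same-colored intervals overlap, so I must show the scheme is robust to this. I would handle this by phrasing the invariant in terms of points rather than vertices — requiring that for every point $x$, the topmost (or otherwise canonically chosen) interval containing $x$ receives a color distinct from all others containing $x$ — and then checking that three colors are enough to maintain this canonical-uniqueness invariant across all the $O(n)$ combinatorial events of the sweep. Establishing a matching lower bound (that 2 colors do not suffice) is easy via three mutually overlapping intervals arranged so every closed neighborhood is the whole triple, so the genuine content is the upper bound, and its crux is this careful point-sweep bookkeeping.
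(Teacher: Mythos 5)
Your proposal never actually produces a coloring: it enumerates several candidate strategies (peeling by rightmost left-endpoint, a ``layered scheme'', a left-to-right greedy with an unspecified invariant) and then explicitly defers the crux --- ``verifying that exactly three colors suffice \dots and that the greedy/recursive choice never gets stuck'' --- which is precisely the content of the lemma. Worse, the one invariant you state concretely is wrong for this purpose: you propose to color so that ``at every point $x \in \ell$, among the intervals containing $x$ there is one with a unique color.'' That is conflict-free coloring of intervals with respect to \emph{points}, a different hypergraph (hyperedges are stabbing sets, not closed neighborhoods), and it provably requires $\Omega(\log n)$ colors already for the family $\{[0,i] : 1 \le i \le n\}$, whose stabbing sets form a chain of nested hyperedges. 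So the ``point-sweep bookkeeping'' you identify as the crux cannot be carried out with three colors, and it is not what the lemma asks for anyway. (Your closing remark on a lower bound is also off: three pairwise-overlapping intervals colored $1,1,2$ form a valid closed CF-coloring with two colors.)

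The idea you are missing is the paper's explicit construction. Greedily build a chain $s_1, s_2, \ldots$: let $s_1$ be the interval with leftmost left endpoint and, among those, maximal right endpoint; let $s_{i+1}$ be the interval of maximal right endpoint among those that extend past $r(s_i)$ without skipping over any yet-uncovered point of $\bigcup \F$. Color the chain alternately $1,2$ and color everything else $3$. The maximality of each $r(s_i)$ forces three structural facts: non-consecutive chain intervals are disjoint (so each chain interval is its own uniquely colored closed neighbor); any interval meeting both $s_i$ and $s_{i+2}$ also meets $s_{i+1}$; and no interval colored $3$ can meet four consecutive chain intervals. Hence every interval colored $3$ meets at most two chain intervals of one of the colors $1,2$ and exactly one of the other, and that one is its uniquely colored neighbor. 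This is the bookkeeping your sketch leaves open, and it is vertex-based (closed neighborhoods), not point-based.
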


\begin{proof}
Let $\bigcup \F = \cup_{s \in \F} s$ be the union of all intervals of $\F$ . For each interval $s \in \F$, denote its left and right endpoints by $l(s),r(s)$, respectively, so that $s=[l(s),r(s)]$.
We provide a closed CF-coloring of $G$ in an inductive fashion. Specifically, we define an inductive process in which we color some intervals $s_1,s_2,s_3,\ldots$ with colors $1,2$ alternately, where $\{s_i\}$ are defined as follows.
Let $S_1 = \{s \in \F: l(s) = \min_{t \in \F} l(t)\}$ be the sub-family of all intervals in $\F$ whose left endpoint is the leftmost. Let $s_1$ the interval in $S_1$ whose right endpoint is maximal. Color $s_1$ with color 1.
Now, assume that we performed $i$ coloring steps and that the interval $s_i$ is colored 1 (respectively, 2). Let
\begin{align*}
S_{i+1} = \{s \in \F: &l(s) \leq r(s_i) \wedge r(s)>r(s_i)\} \cup \\
&\{s \in \F: (l(s) > r(s_i)) \wedge \left([r(s_i),l(s)) \cap \bigcup \F = \emptyset \right) \},
\end{align*}
that is, the set of intervals whose right endpoints is greater than $r(s_i)$, and whose left endpoint does not `leave behind it' any point $x \in \bigcup \F$ that was not covered by the previous $s_j$'s. Let $s_{i+1}$ be the interval in $S_{i+1}$ whose right endpoint is maximal. Color $s_{i+1}$ with color 2 (respectively, 1). Note that by the construction, the sequence $\{r(s_i)\}$ is strictly increasing and that $\cup_i s_i = \bigcup \F$.

After completing the inductive process, we color all remaining elements of $\F$ with color 3. We claim that the resulting coloring is indeed a closed CF-coloring, since each $s \in \F$ has a unique `neighbor' colored either 1 or 2 (where if $s$ is colored 1 or 2, it is the uniquely colored neighbor of itself). By symmetry, it is clearly sufficient to prove that the following three statements hold:
%\begin{enumerate}
%\item

\medskip \noindent (1). Any two intervals $s_i,s_j$ such that $j \geq i+2$ are disjoint, and in particular, any $s_i,s_j$ that have the same color are disjoint.

%\item
\medskip \noindent (2). Any $s \in \F$ that intersects both $s_i$ and $s_{i+2}$ for some $i$, intersects $s_{i+1}$.

%\item
\medskip \noindent (3). There does not exist $s \in \F$ colored 3 such that $N_G(s)$ contains at least two intervals colored 1 and at least two intervals colored 2.
%\end{enumerate}

Statement (1) implies that any interval $s \in \F$ colored 1 or 2 is itself's unique colored neighbor. Statements (2) and (3), together with the observation that each $s \in \F$ intersects some $s_i$ (since $\cup_i s_i = \bigcup \F$), imply that any interval $s \in \F$ colored 3 has a unique neighbor colored either 1 or 2,  which will complete the proof.

\medskip \noindent \textbf{Proof of (1).} Assume that $i<j$ and $s_i \cap s_j \neq \emptyset$. As the sequence $\{r(s_i)\}$ is strictly increasing, we have $r(s_j)>r(s_i)$. On the other hand, since $s_i \cap s_j \neq \emptyset$, we have $l(s_j) \leq r(s_i)$. Therefore, by the definition of $S_{i+1}$, we have $s_j \in S_{i+1}$. Thus, by the choice of $s_{i+1}$, $r(s_j) \leq r(s_{i+1})$. Using again the fact that $\{r(s_i)\}$ is strictly increasing, we must have $j=i+1$, as asserted. The fact that any $s_i,s_j$ that have the same color satisfy $j \geq i+2$ or $i \geq j+2$ holds since the $s_i$'s are colored 1,2 alternately.

\medskip \noindent \textbf{Proof of (2).} Let $s \in \F$ be an interval that intersects $s_i$ and $s_{i+2}$, for some $i$. By Statement (1), we have $l(s) \leq r(s_i)$ and $r(s) \geq l(s_{i+2}) > r(s_i)$. By the definition of the set $S_{i+1}$, this implies that all elements $t \in S_{i+1}$ satisfy $l(t) \leq r(s_i)$ (since if for some $t$, $l(t)>r(s_i)$ then $[r(s_i),l(t)) \cap \bigcup \F \neq \emptyset$ because of $s$). In particular, we have $l(s_{i+1}) \leq r(s_i)$ and $r(s_{i+1})>r(s_i)$. Therefore, $r(s_i) \in s \cap s_{i+1}$, and thus $s$ intersects $s_i$, as asserted.

\medskip \noindent \textbf{Proof of (3).} Let $s \in \F$ be an interval colored 3 that intersects at least two intervals colored 1 and at least two intervals colored 2. By Statement (1), this implies that there exists $i$ such that $s$ intersects $s_i,s_{i+1},s_{i+2},$ and $s_{i+3}$. (Note that if some $t \in \F$ intersects $s_i$ and $s_{i+2k}$ for some $k$, then $t$ intersects all intervals $s_{i+2j}$, $1 \leq j \leq k-1$, as the intervals $\{s_{i+2j}\}$ are pairwise disjoint and the sequence of their right endpoints is increasing.) This, in turn, implies that $l(s)<r(s_i)$ and $r(s)>l(s_{i+3})>r(s_{i+1})>r(s_i)$. Hence, $s \in S_i$ and its right endpoint is larger than $r(s_{i+1})$ which contradicts the choice of $s_{i+1}$.
This completes the proof of the Lemma.
\end{proof}

\noindent Now we are ready to present the proof of Theorem~\ref{Thm:Rectangles}.
\begin{proof}[Proof of Theorem~\ref{Thm:Rectangles}]
Denote the minimum number of colors required for closed CF-coloring the intersection graph of \emph{any} family of $n$ axis-parallel rectangles by $f(n)$. We obtain a recursive bound on $f(n)$.

Let $\F$ be a family of $n$ axis-parallel rectangles in the plane, and assume, for sake of convenience, that $n$ is a power of 2. Let $\ell$ be a vertical line such that at most $n/2$ elements of $\F$ lie strictly to the left of $\ell$ and at most $n/2$ elements of $\F$ lie strictly to the right of $\ell$. Let $\F'$ (respectively, $\F''$) be the sub-family of all elements of $\F$ that lie strictly to the right (respectively, left) of $\ell$, and let $\F_{\ell}$ denote the sub-family $\{S \in \F: S \cap \ell \neq \emptyset\}$. By Lemma~\ref{Lemma:Line}, the intersection graph of $\F_{\ell}$ admits a closed CF-coloring with 3 colors. (This holds since the intersection graph of $F_{\ell}$ is easily seen to be isomorphic to the intersection graph of the family of intervals $\{S \cap \ell: S \in F_{\ell}\}$ on the line $\ell$). By induction, the intersection graphs of each of the families $\F', \F''$ admit a closed CF-coloring with $f(n/2)$ colors. We use the same colors for $\F', \F''$ , and three additional colors for $\F_{\ell}$. Thus, we obtain a coloring of $\F$ with $f(n/2)+3$ colors. We claim that this is a closed CF-coloring.
Indeed, for each $s \in \F$ there exists a step in the recursive process in which $s$ belongs to the sub-family $\F_{\ell'}$ for some $\ell'$. The coloring of $\F_{\ell'}$ provides $s$ with a uniquely colored neighbor. (Note that while the colors used in the coloring of $\F_{\ell'}$  are used also for other elements of $\F$, they are used only for elements that do not intersect elements of $\F_{\ell'}$ , since they are separated from $\F_{\ell'}$  by a line.)
Therefore, we have $f(n) \leq f(n/2)+3$, which implies $f(n)=O(\log n)$, as asserted.
\end{proof}

We note that with a little more effort, one can prove that any family of closed intervals on a line admits a pointed CF-coloring with 4 colors. However, this coloring cannot be leveraged into a pointed CF-coloring of the intersection graph of axis-parallel rectangles in the plane (like we did in the closed CF-coloring case in the proof of Theorem~\ref{Thm:Rectangles}). Indeed, it may occur that some $s \in \F_{\ell}$ has no neighbors in $\F_{\ell}$ but does have neighbors in, say, $\F'$. For those neighbors, we have no promise that one of them is uniquely colored.

%\appendix

\section{Intersection Graph of $\rho$-fat Objects}
\label{sec:rho-fat}

Here we prove Proposition~\ref{Thm:rho-fat} which asserts an upper bound on the closed and pointed CF chromatic numbers of the intersection graph of $\rho$-fat objects in the plane.
\begin{definition}
A simple Jordan region $C$ in the plane is called $\rho$-fat if there exists $x \in C$ and discs $A,B$ centered at $x$ with radiuses $r_A,r_B$ (respectively), such that $A \subset C \subset B$ and $r_B/r_A \leq \rho$.
\end{definition}
These objects were studied in numerous papers (see, e.g., Section~2 of the survey~\cite{APS-union} and the references therein).
%For example, it was shown that the union complexity of $n$ such objects is at most $O_{\rho}(n^{1+\eps})$, for any $\eps>0$ (see~\cite{Efrat1}).

For a $\rho$-fat object $C$, we say that the \emph{size} of $C$ is the maximal possible radius, $\max \{r_A\}$, where the maximum is taken over all pairs of concentric discs $A,B$ with $A \subset C \subset B$ and $r_B/r_A \leq \rho$. The \emph{size-ratio} of a family $\F$ of $\rho$-fat objects is $\max_{C_1,C_2 \in \F} \left(\frac{\mathrm{size}(C_1)}{\mathrm{size}(C_2)} \right)$.

We recall the formulation of Proposition~\ref{Thm:rho-fat}.
\begin{proposition*}
Let $k, \rho \geq 1$, and let $\F$ be a finite family of $\rho$-fat objects in the plane of size-ratio $k$. Let $G$ be the intersection graph of $\F$. Then:
\begin{enumerate}
\item $\chi_{CF}^{pn}(G) = O(k^2 \rho^2)$.
\item $\chi_{CF}^{cn}(G) = O(\log k \cdot \rho^2)$.
\end{enumerate}
\end{proposition*}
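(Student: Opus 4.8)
The plan is to base everything on a single geometric packing estimate and then feed it into a grid-colouring scheme, handling the closed and pointed cases slightly differently.

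\textbf{Step 1 (key geometric input).} First I would record the basic fatness estimate: writing $\sigma$ for the smallest size occurring in $\F$, every $C \in \F$ contains a disc of radius $\ge \sigma$ and is contained in a disc of radius $\le \rho k \sigma =: R$ (here the size-ratio $k$ is used). From this I would prove the packing lemma: for any fixed $C \in \F$, the number of \emph{pairwise disjoint} members of $\F$ that meet $C$ is $O(\rho^2 k^2)$. Indeed, the inner discs (radius $\ge \sigma$) of such members are pairwise disjoint and all lie in a disc of radius $O(R)$ about the centre of $C$, so comparing areas gives $O((R/\sigma)^2) = O(\rho^2 k^2)$. Equivalently, every neighbourhood $N_G(v)$ has independence number $O(\rho^2 k^2)$.

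\textbf{Step 2 (from packing to a grid of cliques).} Next I would impose a grid of cells of diameter less than $2\sigma$ and assign each object to the cell containing the centre of its inner disc. Two objects in a common cell have inner-disc centres at distance less than $2\sigma$, so their inner discs meet and the objects intersect; hence each cell induces a clique of $G$. Since adjacent objects have centres within $O(R)$ of one another, I would colour the cells with a palette in which any two cells at distance $O(R)$ receive distinct colours; a standard grid colouring achieves this with $O((R/\sigma)^2)=O(\rho^2 k^2)$ colours. The consequence I want is that any two adjacent objects in different cells get different cell-colours, and that the neighbours of a fixed $v$ occupy cells which are pairwise within $O(R)$, hence pairwise distinctly coloured.

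\textbf{Step 3 (closed case).} For $\chi_{CF}^{cn}$ I would split $\F$ into $O(\log k)$ size classes, each of size-ratio at most $2$, and colour different classes from disjoint palettes. Within one class the scale is fixed, so $R = O(\rho\sigma)$ and Step~2 needs only $O(\rho^2)$ cell-colours; refining each cell-clique by a constant-size closed CF-colouring gives $O(\rho^2)$ colours per class. This is a closed CF-colouring of the class: for $v$ in cell $P$ the within-cell colouring supplies a uniquely coloured vertex of $P \subseteq N_G[v]$ when $|P|\ge 2$, and otherwise $v$ itself is uniquely coloured because every neighbour lies in a differently coloured cell. As the class palettes are disjoint and $N_G[v]$ meets class $j$ in exactly the class-$j$ neighbourhood of $v$, a colour that is unique inside a class is unique in all of $N_G[v]$; summing over the $O(\log k)$ classes yields $\chi_{CF}^{cn}(G)=O(\rho^2\log k)$.

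\textbf{Step 4 (pointed case) and the main obstacle.} For $\chi_{CF}^{pn}$ I would run Step~2 globally, with $O(\rho^2 k^2)$ cell-colours, and refine each cell-clique by a constant-size pointed CF-colouring. When $v$ has a neighbour in its own cell $P$, the within-$P$ colouring yields a uniquely coloured vertex of $P\setminus\{v\}\subseteq N_G(v)$, unique overall since no neighbour outside $P$ carries the colour of $P$. The difficulty I expect to be the crux is the case where $v$ has \emph{no} neighbour in its own cell: here the closed-case trick of using $v$ itself is unavailable, and although the neighbouring cells are pairwise distinctly coloured, a single neighbouring cell may contribute several neighbours of $v$, none of them uniquely coloured. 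I would resolve this by an ordering/representative refinement — processing cells in a degeneracy-type order supplied by the packing bound of Step~1 and earmarking in each cell a uniquely coloured representative seen by every external vertex adjacent to that cell. Verifying that fatness forces such a representative to be adjacent to all the relevant external neighbours is where the real geometric work lies; once established it keeps the palette at $O(\rho^2 k^2)$ and gives $\chi_{CF}^{pn}(G)=O(\rho^2 k^2)$.
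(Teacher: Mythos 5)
Your Steps 1--3 are sound and close in spirit to the paper's argument: the paper also normalizes the minimum size to $1$, lays down a unit grid, exploits the fact that any two objects whose inner-disc centers lie in a common cell must intersect, uses a cyclic colouring of the cells with period $\Theta(k\rho)$ so that cells of equal colour are too far apart to both meet a single object, and obtains the closed bound by splitting into $O(\log k)$ size classes with disjoint palettes. So the closed case in your Step 3 would go through.

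The genuine gap is Step 4, and you have correctly located it but not closed it. The fix you propose rests on the claim that fatness forces an earmarked representative of a cell $P$ to be adjacent to every external vertex adjacent to some member of $P$. That claim is false: take $\rho=1$, minimum size $1$, let $P$ contain the centers of a disc $C_1$ of radius $k$ and a unit disc $C_2$, and let $v$ be a unit disc at distance roughly $k$ from $P$ touching $C_1$; then $v$ is adjacent to $C_1$ but not to $C_2$, so no single representative of $P$ can serve all external neighbours of the cell. The paper resolves the pointed case by inverting the direction in which the cell-clique structure is used. Rather than CF-colouring each cell internally and hoping external vertices see a unique colour in \emph{other} cells, it designates exactly one representative per nonempty cell, gives that representative the cell's grid colour, and gives every other object a single garbage colour. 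Every non-representative then has the representative of \emph{its own} cell as a neighbour (this is exactly your cells-are-cliques observation), and that neighbour is uniquely coloured in its pointed neighbourhood because same-coloured cells are out of range. The only vertices left without a uniquely coloured neighbour are representatives all of whose neighbours are garbage-coloured; for each such representative with grid colour $i$ one of its garbage neighbours is recoloured with a second-tier colour $(i,2)$, and the same spacing argument shows $(i,2)$ occurs at most once in any pointed neighbourhood. This keeps the palette at $2t+1=O(k^2\rho^2)$ and requires no adjacency between a cell's representative and external vertices. If you want to salvage your own scheme, replace the ``earmarked representative seen by all external neighbours'' idea with this recolouring of one neighbour of each isolated-in-its-cell vertex; as written, Step 4 does not constitute a proof.
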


\begin{proof}
The proof uses a packing argument.

For each $C \in \F$, let $x_C \in C$ be a point for which there exist discs $A,B$ centered at $x_C$ with radiuses $r_A,r_B$ (respectively), such that $A \subset C \subset B$, $r_B / r_A \leq \rho$, and $\mathrm{size}(C)=r_A$. (Such a point must exist by the definition of $\mathrm{size}(C))$. We assume w.l.o.g. that $\min_{C \in \F} \mathrm{size}(C) =1$. We slightly abuse the notation and say that $C,C' \in F$ are neighbors if the corresponding vertices of $G$ are neighbors, i.e., if $C \cap C' \neq \emptyset$.

We start with the proof of Part~(1). Consider the $\ell^1$-grid in the plane (i.e., the standard division of the plane to $1 \times 1$ squares). Since $\F$ is finite, we can assume w.l.o.g. that all points $x_C$, $C \in \F$ lie in the interiors of cells of the grid. We cover the plane by squares of size $(4k \lceil \rho \rceil +1)  \times (4k \lceil \rho \rceil +1)$ whose sides lie on the grid. Let $t= (4k \lceil \rho \rceil +1)  \times (4k \lceil \rho \rceil +1)$ be the number of cells in each square. We take an arbitrary square, and assign to each cell $A$ in the square a color $c(A) \in \{1,2,\ldots,t\}$, such that each color is assigned exactly once. Then, we copy the coloring to all other squares cyclically, such that two cells get the same color if and only if their $x,y$ coordinates are equal modulo $4k \lceil \rho \rceil +1$.

We define a coloring of $\F$ with the color set $\{(i,\ell):1 \leq i \leq t+1, \ell=1,2\}$ in a two-phase procedure:
\begin{enumerate}
\item \textbf{Initial coloring:} For each cell $A$ that contains points of the form $x_C$, we choose one of such $C$'s arbitrarily and assign to it the color $(c(A),1)$. Then we assign the color $(t+1,1)$ to all remaining elements of $\F$.

\item \textbf{Recoloring:} For each $C \in \F$ colored $(i,1)$ with $i \leq t$ whose set of neighbors is non-empty, we check whether $C$ has a neighbor colored $(i',1)$ for some $i' \leq t$. If not, we choose arbitrarily a neighbor $C'$ of $C$ (colored $(t+1,1)$) and re-color it $(i,2)$. All the remaining elements of $\F$ stay unchanged.
\end{enumerate}
We claim that the resulting coloring is a pointed CF coloring of $\F$ with $2t+1=O(k^2 \rho^2)$ colors. Let $C \in \F$ be an object whose neighbor set is non-empty. We consider three cases:
\begin{enumerate}
\item If $C$ is colored $(t+1,1)$ then there exists $C' \in \F$ colored $(i,1)$ for some $i \leq t$, such that $x_{C'}$ lies in the same cell as $x_C$. Since each of $C,C'$ contains a disc of radius $\geq 1$ centered as $x_C,x_{C'}$ (respectively), we have $C \cap C' \neq \emptyset$, and thus, $C$ has a neighbor colored $(i,1)$. On the other hand, $C$ can have at most one neighbor colored $(i,1)$ since $C$ is included in a disc of radius $\leq k \rho$ centered at $x_C$, and all other cells whose color is $i$ are `too far' from $x_C$. (Note that the size of the square was chosen in order to assure that this property holds.) Therefore, $C'$ is a uniquely colored neighbor of $C$.

\item If $C$ is colored $(i,1)$ for some $i \leq t$, then by the second phase of the construction, $C$ has either a neighbor colored $(i',1)$ for some $i' \leq t$, or a neighbor colored $(i,2)$. In either case, this is a uniquely colored neighbor by the argument of the previous case.

\item If $C$ is colored $(i,2)$ then by the second phase of the construction, it must have a neighbor colored $(i,1)$ and this is a uniquely colored neighbor.
\end{enumerate}
Thus, the coloring is indeed a pointed CF coloring of $\F$ with $O(k^2 \rho^2)$ colors, as asserted.

\medskip

To prove Part~(2) of the Proposition, we divide the family $\F$ into sub-families $\F_i=\{C \in \F: 2^{i-1} \leq \mathrm{size}(C) < 2^i\}$, for $i=1,2,\ldots,\lceil \log k \rceil$. Since each $\F_i$ is a family of $\rho$-fat regions with size-ratio $\leq 2$, it follows from Part~(1) of the Proposition that $\F_i$ admits a pointed CF coloring with $O(\rho^2)$ colors. Furthermore, as mentioned in the introduction, this implies that $\F_i$ admits a \emph{closed} CF coloring with $O(\rho^2)$ colors.

We define a coloring of $\F$ with $O(\log k \cdot \rho^2)$ colors by taking a closed CF coloring of each $\F_i$ with $O(\rho^2)$ \emph{fresh} colors and taking the union of these colorings as the coloring of $\F$. We claim that the resulting coloring is a closed CF coloring of $\F$.

Indeed, let $C \in \F$ and assume w.l.o.g. that $C \in \F_j$. The closed CF coloring of $\F_j$ supplies $C$ with a uniquely colored neighbor with respect to $\F_j$ (which may be $C$ itself since we consider a closed CF coloring). This neighbor is uniquely colored also with respect to $\F$ since the colors used in the coloring of $\F_j$ are not used for any other element of $\F$. This completes the proof.
\end{proof}

We note that the argument of Part~(2) does not yield a pointed CF coloring of $\F$ with $\log k \cdot \rho^2$ colors. Indeed, if some $C \in \F_j$ has no neighbors in $\F_j$ but does have neighbors in $\F_{\ell}$ for some $\ell \neq j$, then we have no promise that $C$ has a uniquely colored neighbor (as all neighbors of $C$ in $\F_{\ell}$ may have the same color).

\section{An alternative proof of Theorem~\ref{Thm:Pseudo-discs}}
\label{sec:alternative}

Here we sketch an alternative proof of  the second part of Theorem~\ref{Thm:Pseudo-discs} (which appears in section~\ref{sec:VminusB}).
Recall that in this context $\F$ is a family of pseudo-discs in the plane, $G=(V,E)$ is the intersection graph of $\F$, and $B$ is a maximal independent set in $V$.
 We defined $I$ as the hypergraph whose vertex set is $V(I)=V \setminus B$ and whose hyperedges are all sets of the form $e_b = \{d \in V \setminus B: d \cap b \neq \emptyset\}$, for all $b \in B$.
The main step in section~\ref{sec:VminusB} is a proper coloring of any sub-hypergraph $I' \subset I$ with 7 colors. Here we present a shorter proof of a slightly weaker statement:
\begin{proposition}\label{pro:appendix}
Any induced sub-hypergraph $I' \subset I$ admits a proper coloring with a constant number of colors.
\end{proposition}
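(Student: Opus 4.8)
The plan is to re-derive the "constant-color properness" of $I'$ by packaging the geometric heavy lifting into two known black-box results rather than redoing the Hanani--Tutte argument by hand. The goal statement is weaker than the explicit 7-coloring proved in Section~\ref{sec:VminusB} (we only need \emph{some} absolute constant), so I have room to be lossy. The natural route is to invoke the theory of $k$-admissible families of regions from~\cite{k-admissible}: the key structural fact there is that the Delaunay-type graph associated to a family of pseudo-discs has bounded edge density (linearly many edges), and more generally that the relevant "at most two deep" incidence structures are sparse. If I can exhibit that every induced sub-hypergraph $I'\subset I$ has an associated graph that is sparse (average degree bounded by an absolute constant), then degeneracy gives a constant-color proper coloring by exactly the inductive peeling argument already used in Step~2 of the disc case.

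\textbf{Main steps.} First I would, exactly as in Section~\ref{sec:VminusB}, associate to each induced sub-hypergraph $I'\subset I$ the auxiliary graph $G_{I'}$ whose edges are the pairs $\{d_1,d_2\}$ realized as a size-2 hyperedge $e_b=\{d\in V(I'):d\cap b\neq\emptyset\}=\{d_1,d_2\}$ for some $b\in B$. Second, instead of proving planarity of $G_{I'}$ directly, I would argue that the family $\bar\F$ of pseudo-discs together with the disjoint family $B$ forms a $k$-admissible configuration in the sense of~\cite{k-admissible}, and cite the sparsity bound from that paper to conclude that $G_{I'}$ has $O(|V(I')|)$ edges, hence a vertex of constant degree. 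Third, because $I$ is (after the pruning step that guarantees each surviving pseudo-disc contains a depth-1 point, as in Step~1 of Section~\ref{sec:VminusB}) a rank-2 hypergraph --- every hyperedge of size $\geq 2$ contains a size-2 sub-hyperedge, i.e.\ an edge of $G_{I'}$ --- a proper coloring of $G_{I'}$ is automatically a proper coloring of $I'$. Fourth, I would run the same last-in-first-out peeling order as in the disc case: repeatedly extract a constant-degree vertex of the current auxiliary graph, reverse the order, and greedily color, the constant degree bounding the number of colors needed. The pruned pseudo-discs absorb one extra color as before.

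\textbf{The hard part} will be verifying that the hypotheses of the external sparsity/admissibility result from~\cite{k-admissible} are genuinely met by the pair $(\bar\F, B)$, and in particular that the edge-realization condition ($b$ meets \emph{exactly} two members of $V(I')$) maps onto the "at most depth 2" regime those results control. This is where the depth-1 pruning from Step~1 earns its keep: it is precisely what lets me treat the points $x_d$ as canonical representatives and translate the two-intersection condition into an admissible incidence. Once the cited theorem applies, everything downstream (rank-2 reduction, degeneracy coloring, the extra color for the removed pseudo-discs) is routine and mirrors the disc case verbatim, so I would keep those paragraphs terse and point the reader to Section~\ref{sec:VminusB} for the identical details.
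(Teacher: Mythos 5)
There is a genuine gap at exactly the point you flag as ``the hard part,'' and the citation you propose does not close it. The results of~\cite{k-admissible} that this paper uses (Theorem~\ref{thm:k_admiss}) bound the \emph{union complexity} of a $k$-admissible family; they say nothing about the sparsity of the auxiliary graph $G_{I'}$, whose edges are witnessed by a condition of a different type, namely ``$b\in B$ intersects exactly the two members $d_1,d_2$ of $V(I')$.'' A feature witnessing such an edge need not appear anywhere on the boundary of the union of the pseudo-discs, so union-complexity bounds do not transfer; nor can you bound $|E(G_{I'})|$ by $|B|$, since $|B|$ is unrelated to $|V(I')|$. In the paper, the sparsity (in fact planarity) of $G_{I'}$ is precisely the content of Claim~\ref{Claim:G_Iplanar2}, which is proved by an explicit Hanani--Tutte drawing anchored at the depth-1 points and relying on Lemma~\ref{Lemma:BP}; it is described there as the most complex part of the argument, and your proposal replaces it with a reference that does not supply it. A secondary error: $I$ is \emph{not} a rank-2 hypergraph after the pruning step. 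The rank-2 property is engineered only for the hypergraph $\hat H$ on $B$, via the shrinking/enlargement step of Lemma~\ref{Lemma:Rom1}; a hyperedge $e_b$ of $I$ of size $5$ need not contain any size-2 sub-hyperedge $e_{b'}$. (This is harmless only because the peeling argument you also invoke does not actually need rank-2; but as written your step ``a proper coloring of $G_{I'}$ is automatically a proper coloring of $I'$'' is false.)

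For comparison, the paper's own proof of Proposition~\ref{pro:appendix} takes a different route that avoids the auxiliary graph entirely: for each witness $b\in B$ whose hyperedge $e_b$ has pairwise disjoint traces inside $b$, it performs a local ``mutation,'' attaching thin crossing tails to one pair $d_1,d_2\in e_b$ inside $b$ so as to create a point of depth $2$. The mutated family is $4$-admissible, hence has linear union complexity by Theorem~\ref{thm:k_admiss}, and Theorem~\ref{thm:linear_comp} then yields a constant proper coloring with respect to \emph{point-determined} hyperedges, which is shown to induce a proper coloring of $I$. If you want to use $k$-admissibility as a black box, that reduction to point-determined hyperedges is the missing idea; otherwise you must prove the sparsity of $G_{I'}$ directly, as in Claim~\ref{Claim:G_Iplanar2}.
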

 %The aim of this section is to replace the part of the proof in section~\ref{sec:VminusB} in which we prove that any induced sub-hypergraph $I' \subset I$ adimts a proper coloring with a constant number of colors

We start with a few definitions and results needed in the sequel:
\begin{definition}

 Let $\C$ be a family of simply connected regions bounded by simple closed curves in general position in the plane. Let $k$ be an even integer. $C$ is called $\emph{k-admissible}$ if for any pair $C_1,C_2 \in \C$

\begin{enumerate}
\item $C_i \setminus C_j$ and $C_j \setminus C_i$ are connected, and
\item $\partial C_i$ and $\partial C_j$ cross in at most $k$ points.
\end{enumerate}
\end{definition}

\begin{definition}
For a finite family $\mathcal{C}=\{C_1,C_2,\ldots,C_n\}$ of geometric objects in the plane, the \emph{union complexity} of $\mathcal{C}$ is the number of faces of all dimensions of the arrangement of the objects' boundaries, which lie on the boundary of $ \cup_{i=1}^n C_i$.
\end{definition}

\begin{theorem}[\cite{k-admissible}]\label{thm:k_admiss}
Let $\C=\{C_1,C_2,\ldots,C_n\}$ be a k-admissible family of $n \geq 3$ simply connected regions in general position in the plane. Then the union complexity of $\C$ is at most $O(kn)$.
\end{theorem}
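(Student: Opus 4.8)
The plan is to prove Proposition~\ref{pro:appendix} by exhibiting a constant $k$ (depending only on the fact that $\F$ is a family of pseudo-discs) such that every induced sub-hypergraph $I' \subset I$ admits a proper coloring with $k$ colors. The strategy is to reduce the coloring question to a bounded-degeneracy statement about an auxiliary graph, and to obtain that degeneracy from the union-complexity bound of Theorem~\ref{thm:k_admiss}. First I would observe that $\F$, being a family of pseudo-discs, is $2$-admissible in the sense of the definition above: for any two pseudo-discs the boundaries cross in at most two points, and the set-differences $C_i \setminus C_j$ and $C_j \setminus C_i$ are connected. Hence Theorem~\ref{thm:k_admiss} applies to any sub-family of $\F$ with the constant $k=2$, giving linear union complexity $O(n)$ for every sub-family.

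Next I would set up the auxiliary graph. For any induced sub-hypergraph $I' \subset I$ on vertex set $V(I') \subset \F \setminus B$, I consider the same Delauney-type graph $D_{I'}$ whose edges are the size-$2$ hyperedges of $I'$, i.e. pairs $\{d_1,d_2\}$ for which some $b \in B$ meets exactly $d_1$ and $d_2$ among the vertices of $I'$. The key reduction is that if one can show $D_{I'}$ has $O(|V(I')|)$ edges for every $I'$, then $D_{I'}$ is \emph{uniformly sparse}: every induced subgraph has linearly many edges, so $D_{I'}$ has bounded degeneracy, hence bounded chromatic number. To pass from a proper coloring of $D_{I'}$ to a proper coloring of the hypergraph $I'$, I would invoke the pruning idea from Section~\ref{sec:VminusB}: after removing the pseudo-discs that contain no depth-$1$ point (and reserving one extra color for them), the remaining hypergraph is rank-$2$, so that any hyperedge of size $\geq 2$ contains an edge of $D_{I'}$ and is therefore non-monochromatic in any proper coloring of $D_{I'}$.

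The heart of the argument, and the step where Theorem~\ref{thm:k_admiss} enters, is bounding the number of edges of $D_{I'}$ by $O(|V(I')|)$. The plan is to charge each edge of $D_{I'}$ to a low-complexity feature of the union boundary of the pruned family $\bar{\F}'$. Each edge corresponds to a pseudo-disc $b \in B$ that meets exactly two members $d_1,d_2$ of $V(I')$; since every $d_i$ carries a depth-$1$ point, the portion of $\partial d_1$ and $\partial d_2$ bordering the union $\bigcup \bar{\F}'$ near $b$ gives a vertex or edge of the arrangement lying on the union boundary, and distinct edges of $D_{I'}$ can be made to charge distinct boundary features (up to a bounded multiplicity). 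By the $2$-admissibility of $\bar{\F}'$ and Theorem~\ref{thm:k_admiss}, the total number of such boundary features is $O(|V(I')|)$, so $D_{I'}$ has $O(|V(I')|)$ edges. I expect this charging to be the main obstacle: one must verify that the local configuration around each $b$ genuinely contributes an essentially unique union-boundary feature, and that the pairwise-disjointness of $B$ prevents two different witnesses $b, b'$ from being charged to the same feature more than a bounded number of times.

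Finally, combining the pieces: uniform sparsity of $D_{I'}$ yields a degeneracy bound $\delta$, hence a proper $(\delta+1)$-coloring of $D_{I'}$, which by the rank-$2$ reduction properly colors the pruned hypergraph; adding the reserved color for the removed pseudo-discs gives a proper coloring of $I'$ with a constant number of colors independent of $|V(I')|$. Since this holds for every induced $I' \subset I$, Proposition~\ref{pro:appendix} follows. This alternative route uses Theorem~\ref{thm:k_admiss} as a black box in place of the explicit Hanani--Tutte planarity argument of Claim~\ref{Claim:G_Iplanar2}, which is why it is shorter but yields only the weaker conclusion of a constant (unspecified) number of colors rather than the sharp bound of $7$.
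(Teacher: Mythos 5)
There is a fundamental mismatch here: the statement you were asked to prove is Theorem~\ref{thm:k_admiss} itself --- that every $k$-admissible family of $n \geq 3$ simply connected regions in general position has union complexity $O(kn)$ --- but your proposal never addresses this. Instead you prove (a sketch of) Proposition~\ref{pro:appendix}, a downstream application, and in doing so you explicitly invoke Theorem~\ref{thm:k_admiss} as a black box (``By the $2$-admissibility of $\bar{\F}'$ and Theorem~\ref{thm:k_admiss}, the total number of such boundary features is $O(|V(I')|)$''). A proof of a statement cannot take that very statement as a hypothesis; as an argument for Theorem~\ref{thm:k_admiss} your write-up is circular, and as written it contains no ingredient that would actually bound the number of arrangement features on the boundary of $\bigcup_i C_i$: there is no analysis of the crossing pattern of the boundary curves, no charging of union-boundary vertices to pairs of regions, no Euler-formula or Davenport--Schinzel-type counting, and no induction on $n$ or $k$. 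Note that the paper itself states Theorem~\ref{thm:k_admiss} as a quoted result from~\cite{k-admissible} (with the pseudo-disc case, Corollary~\ref{cor:pseudo_d}, going back to Kedem et al.~\cite{KLPS}), so a blind proof attempt would have to reconstruct that combinatorial-geometric argument --- roughly, that each vertex of the union boundary is a crossing of two boundaries, that condition (1) (connectedness of the differences $C_i \setminus C_j$) forces the crossings between a fixed pair to alternate in a controlled way so that only $O(k)$ of them can appear on the union boundary per pair contributing, and that a planarity argument then caps the total at $O(kn)$. None of this appears in your proposal.

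A secondary remark, in case your real target was Proposition~\ref{pro:appendix}: even there your route diverges from the paper's Appendix~\ref{sec:alternative} in a way that creates a gap. The paper does \emph{not} bound the number of size-$2$ hyperedges of a Delauney-type graph; it performs ``mutations'' (attaching thin crossing tails inside witnesses $b \in B$) to turn $V \setminus B$ into a $4$-admissible family, and then applies Theorem~\ref{thm:linear_comp} to the \emph{depth} hypergraph (hyperedges indexed by points $p \in \Re^2$), so that every hyperedge $e_b$ with $|e_b| > 1$ is non-monochromatic because some point of $b$ is covered by two differently colored regions. Your alternative --- charging each edge of $D_{I'}$ to a distinct union-boundary feature --- is exactly the step you flag as the main obstacle, and it is not obviously repairable: a witness $b$ meeting exactly $d_1, d_2$ need not produce any vertex of the arrangement of $\partial d_1, \partial d_2$ on the union boundary near $b$ (the two pseudo-discs may be disjoint inside $b$, which is precisely the situation the paper's mutation step is designed to fix). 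So even granting Theorem~\ref{thm:k_admiss}, your sketch is missing the idea that makes the appendix proof work.
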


The following special case of theorem~\ref{thm:k_admiss} was actually proved earlier by Kedem et al.:
\begin{corollary}[\cite{KLPS}]\label{cor:pseudo_d}
Any finite family of pseudo-discs in the plane has a linear union complexity.
\end{corollary}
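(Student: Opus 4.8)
The plan is to derive Corollary~\ref{cor:pseudo_d} as the special case $k = 2$ of Theorem~\ref{thm:k_admiss}. Concretely, I would show that any finite family $\F$ of pseudo-discs is a $2$-admissible family in the sense of the definition preceding the theorem; once this is established, the bound $O(kn) = O(2n) = O(n)$ of Theorem~\ref{thm:k_admiss} yields exactly the linear union complexity claimed.

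First I would verify the two admissibility conditions for an arbitrary pair $C_1, C_2 \in \F$. Condition~2 (the boundaries $\partial C_1$ and $\partial C_2$ cross in at most $k = 2$ points) is immediate from the very definition of a pseudo-disc family, since the boundaries of any two members intersect in at most two points. Condition~1 (connectedness of $C_1 \setminus C_2$ and $C_2 \setminus C_1$) requires a short topological argument by case analysis on the number of crossings. If the boundaries do not cross, then the two regions are either disjoint or nested, so each difference is empty, equal to one of the regions, or an annular region, and hence connected. If the boundaries cross in exactly two points $p, q$, these points split $\partial C_1$ into two arcs and $\partial C_2$ into two arcs; the boundary of $C_1 \setminus C_2$ is then formed by the arc of $\partial C_1$ lying outside $C_2$ together with the arc of $\partial C_2$ lying inside $C_1$, which share the endpoints $p, q$ and bound a single region by the Jordan curve theorem. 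The symmetric statement gives connectedness of $C_2 \setminus C_1$.

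Before invoking Theorem~\ref{thm:k_admiss} I would reduce to general position by a standard perturbation argument: an arbitrarily small perturbation of the members of $\F$ keeps them pseudo-discs (a tangency is replaced by at most two transversal crossings, so the bound of two crossings per pair is preserved) and places the family in general position, while the combinatorial complexity of the union can only increase under such a perturbation, so the general-position bound dominates the degenerate case. The small cases $n \le 2$ are trivial, as their union complexity is $O(1) = O(n)$, and hence assuming $n \ge 3$ (as required by Theorem~\ref{thm:k_admiss}) is harmless.

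The main obstacle I anticipate is the connectedness verification in Condition~1: although intuitively clear, it must be argued cleanly from the at-most-two-crossings hypothesis rather than taken for granted, and some care is needed to confirm that, in the case of exactly two transversal crossings, each of the four regions into which the two Jordan curves partition the plane (inside both, inside exactly one of the two, outside both) is connected. Everything else is bookkeeping, since Theorem~\ref{thm:k_admiss} does the heavy lifting once $2$-admissibility has been established.
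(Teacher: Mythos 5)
Your proof is correct and takes the same route the paper intends: the paper presents Corollary~\ref{cor:pseudo_d} precisely as the $k=2$ special case of Theorem~\ref{thm:k_admiss} (citing Kedem et al.\ only for the earlier direct proof), offering no further argument of its own. Your verification that pseudo-discs form a $2$-admissible family --- the crossing bound being immediate from the pseudo-disc definition, connectedness of the set differences following from the four-face structure of two doubly-crossing Jordan curves, plus the perturbation reduction to general position and the trivial cases $n \le 2$ --- correctly supplies exactly the details that derivation presupposes.
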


\begin{theorem}[\cite{smoro}]\label{thm:linear_comp}
Let $\F$ be a family of $n$ simple Jordan regions in the plane with a linear union complexity, and let I be the hypergraph defined by $V(I)=\{C: C \in \F\}$ and $E(I)=\{ \{C \in \F : p \in \C \} : p \in \Re^2 \}$. Then there exists a constant number $\ell$ (that depends on the union complexity) such that $\chi(I) \leq \ell$.
\end{theorem}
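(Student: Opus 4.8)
The plan is to establish the quantitative bound $\chi(I)\le\ell$ with $\ell$ depending only on the union-complexity constant, in a form that applies uniformly to every subfamily. I begin with two easy reductions. Since a hyperedge of size at most $1$ is automatically non-monochromatic, the only constraints come from points $p\in\Re^2$ of \emph{depth} at least $2$ (those contained in at least two members of $\F$); and a coloring is proper for $I$ iff it is non-monochromatic on every \emph{inclusion-minimal} hyperedge of size $\ge 2$, because any larger hyperedge contains such a minimal one. Moreover the hypothesis is hereditary: every subfamily $\F'\subseteq\F$ again has linear union complexity (for $k$-admissible families this is immediate from Theorem~\ref{thm:k_admiss}, and for pseudo-discs from Corollary~\ref{cor:pseudo_d}). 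Hence it suffices to exhibit one constant $\ell=\ell(c)$ that works for $\F$ by an argument phrased entirely in terms of an arbitrary subfamily; this is exactly the uniform input demanded by Theorem~\ref{Lemma:Proper-to-CF}.

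The heart of the proof is a single sparsity estimate. I would introduce the auxiliary \emph{Delaunay-type} graph $D$ on vertex set $\F$, joining two regions $C_1,C_2$ whenever some minimal cell of the arrangement has both of them in its coverage set. The whole scheme rests on the claim that, for every subfamily $\F'$, the graph $D_{\F'}$ has only $O(|\F'|)$ edges. This is where the geometry enters: each edge of $D$ is witnessed by a minimal cell, and by the Clarkson--Shor probabilistic sampling technique the number of such shallow minimal cells — and hence the number of edges of $D$ — is bounded by a constant times the union complexity of a random sample, which the hereditary linear bound keeps at $O(|\F'|)$.

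Everything else then follows formally. A graph whose every induced subgraph has a linear number of edges is $O(1)$-degenerate; in particular its clique number and its chromatic number are both bounded by a constant $\ell$, and a proper vertex coloring of $D$ with $\ell$ colors is obtained greedily along a degeneracy order. Now observe that each inclusion-minimal hyperedge $e=\{C_1,\dots,C_r\}$ of $I$ is a \emph{clique} in $D$: the very cell realizing $e$ witnesses every pair $C_i,C_j$, so all pairs are adjacent. A proper vertex coloring therefore renders every such clique rainbow, hence non-monochromatic for every $r\ge 2$. By the reductions of the first paragraph this breaks all hyperedges of $I$, giving $\chi(I)\le\ell$; since the argument was carried out for an arbitrary subfamily, the same $\ell$ bounds $\chi$ on every induced sub-hypergraph.

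I expect the edge-count estimate of the second paragraph to be the main obstacle, and it is worth emphasizing why it is genuinely the crux rather than a routine byproduct of sparsity: large inclusion-minimal coverage cells would create large cliques in $D$, which would destroy the constant bound on $\chi(D)$. Thus the Clarkson--Shor computation must simultaneously control the number of minimal cells and (implicitly) rule out deep ``local minima'' of the depth function enclosed by more deeply covered cells — configurations that are impossible precisely because $C_i\setminus C_j$ and $C_j\setminus C_i$ are connected in a $k$-admissible family. Getting the sampling bookkeeping to charge every minimal-cell pair against a boundary feature of the union of a random subfamily, and thereby to the linear union complexity, is the delicate technical heart of the proof.
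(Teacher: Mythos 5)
There is a genuine gap, and it sits exactly where you suspected: the hereditary edge-count for your minimal-cell graph $D$ is false, and no Clarkson--Shor bookkeeping can rescue it. Clarkson--Shor bounds the number of \emph{shallow} features of the arrangement (vertices or cells of depth at most $2$, say) in terms of the union complexity of random samples; it says nothing about inclusion-minimal cells, which can be arbitrarily \emph{deep}. Concretely, take a ``flower'': $r$ regions, each consisting of a common core disc together with its own private arm, the arms pairwise disjoint. Every pairwise intersection equals the core, so the unique inclusion-minimal hyperedge of size $\geq 2$ is the full set of all $r$ regions, realized by a single cell of depth $r$. This family (and each of its subfamilies) has linear union complexity, and it is even $4$-admissible --- $C_i \setminus C_j$ is a connected arm and the boundaries cross at most $4$ times --- so your proposed patch via the connectivity of $C_i\setminus C_j$ fails as well (and in any case that hypothesis is not available: Theorem~\ref{thm:linear_comp} assumes only linear union complexity, not $k$-admissibility). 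On this family your graph $D$ is the complete graph $K_r$, with $\binom{r}{2}$ edges and chromatic number $r$, while $\chi(I)=2$. So the route ``make every minimal hyperedge a rainbow clique'' cannot give a constant bound.

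The actual proof in~\cite{smoro} (the paper under review quotes the theorem as a black box) avoids minimal hyperedges entirely. Define the Delaunay graph on $\F$ by joining $C_1,C_2$ when some point has depth \emph{exactly} $2$, covered by $C_1$ and $C_2$ alone. Each such edge can be charged to a vertex of the arrangement of depth at most $2$, and Clarkson--Shor (now legitimately, since these are shallow features) bounds the number of such vertices in every subfamily of size $m$ by $O(\mathcal{U}(m))=O(m)$; hence the Delaunay graph is hereditarily sparse and $O(1)$-degenerate. The coloring is then built by induction rather than by breaking cliques: remove a vertex $C$ of bounded Delaunay degree, properly color the sub-hypergraph on $\F\setminus\{C\}$, and give $C$ a color avoiding its Delaunay neighbors. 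Properness follows because hyperedges are hereditary traces: if $p$ is a point with $|e_p|\geq 3$ and $C\in e_p$, then $e_p\setminus\{C\}$ is a hyperedge of the sub-hypergraph on $\F \setminus \{C\}$ and is non-monochromatic by induction; the only hyperedges that genuinely constrain $C$'s color are those with $|e_p|=2$, which are precisely Delaunay edges. Note how this handles the flower example: the size-$r$ core hyperedge is dealt with by the induction bottoming out at a two-element subfamily, where the core has depth exactly $2$ and forces two distinct colors. Your first paragraph (reduction to depth $\geq 2$, heredity, feeding a uniform constant into Theorem~\ref{Lemma:Proper-to-CF}) is fine; it is the identification of \emph{which} sparse graph controls the hypergraph, together with the inductive (rather than clique-rainbow) use of it, that your proposal is missing.
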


%The alternative proof we present here requires some local changes in some of the pseudo-discs in $V \setminus B$. All the changes will be performed in the interior of some regions from $B$.
Note that if for some $b \in B$, there exists a point $p \in b$ which is contained in at least two pseudo-discs in $V \setminus B$, then in the proper coloring of $I$ given by Theorem~\ref{thm:linear_comp} (using Corollary~\ref{cor:pseudo_d}), $e_b$ is not monochromatic.
The idea here, is to consider $b \in B$ with $|e_b| >1$ that does not contain such a point $p$, and to create such a point `fictively': We choose a pair $d_1,d_2 \in e_b$ and perform on them a local `mutation', inside the interior of $b$, that makes them cross each other.
These mutations will be performed in such a way that the family obtained from $V \setminus B$ by them, will be 4-admissible. Using Theorems~\ref{thm:k_admiss} and~\ref{thm:linear_comp}, this will provide us with a proper coloring of $I$ with a constant number of colors.

\begin{proof}[Proof of Proposition~\ref{pro:appendix}.]
We present the proof for $I'=I$. It will be clear from the proof that the same argument applies for any induced sub-hypergraph $I' \subset I$.
We arrange the elements of $B$ in an arbitrary order $\{b_1,\ldots,b_p\}$, and go over them according to this order. For each $i$, we consider the hyperedge $e_{b_i}$ and make a `mutation' in one pair $(d_1,d_2) \in e_{b_i}$ if and only if the following two conditions hold:
\begin{enumerate}
\item The family $\{d \cap  b : d \in e_{b_i} \}$ contains at least two elements, and all of its elements are pairwise disjoint.
\item The set $\{d : d \in e_{b_i} \}$ does not contain a pair $\{d_1,d_2\}$ for which we performed a mutation in a previous step.
\end {enumerate}

If in step $i$ these two conditions hold, we choose $d_1,d_2 \in e_{b_i}$ and make the mutation by adding to each of them a thin ``tail" inside $b_i$ such that the two ``tails" cross in exactly two points (see Figure~\ref{fig:tails}).  Note that since $\F$ is a family of pseudo-discs, $b_i'=b_i \setminus \cup_{ d \in {e_{b_i} \setminus \{d_1,d_2\}}} {( d \cap b_i )}$ is connected, and so there exists a path in $b_i'$ from a point in $d_1 \cap b_i$ to a point in $d_2 \cap b_i$. We can thicken this path a bit, such that it remains contained in $b'$, and split it into two ``tails" -- one that emanates from $d_1 \cap b_i$ and the other that emanates from $d_2\cap b_i$ -- that cross in exactly two points.

\begin{figure}[tb]
\begin{center}
\scalebox{0.8}{
\includegraphics{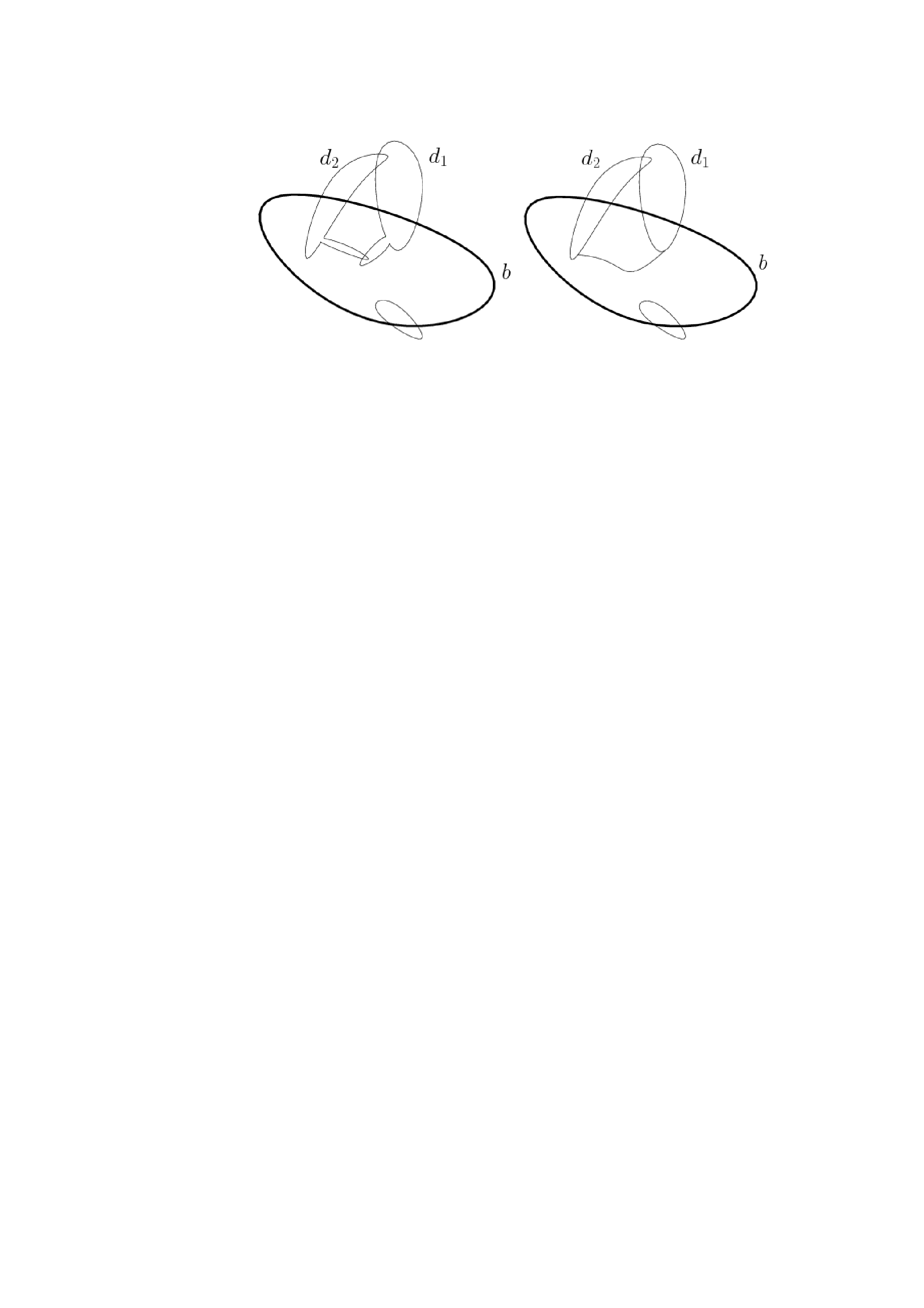}
} \caption{Illustration of the proof of Proposition~\ref{pro:appendix}. The figure demonstrates mutation performed to a pair
of pseudo-disks $d_1,d_2 \in e_{b}$. On the right of the figure, there is a path inside $b$ between $d_1$ and $d_2$. On the
left of the figure, the path is thickened into a pair of `tails' -- one that emanates from $d_1$ and another that emanates
from $d_2$.} \label{fig:tails}
\end{center}
\end{figure}

It is not difficult to see that due to the second condition, the sequence of mutations transforms the family $V \setminus B$ to a $4$-admissible family, which we call $\F'$. By Theorem~\ref{thm:k_admiss}, the union complexity of $\F'$ is linear. Theorem~\ref{thm:linear_comp} implies that the family $\F'$ admits a proper coloring $\alpha$ with a constant number of colors, such that for any point $p \in \Re^2$, if the family $\{d:d \in \F', p \in d\}$ contains more than one element, then its elements are colored with at least two different colors.

We claim that this coloring $\alpha$ of $\F'$, naturally induces a coloring of $V \setminus B$ which is a proper coloring of $I$. Indeed, considering any $b \in B$ with $|e_b|>1$, there are three cases:
\begin{enumerate}
\item There exists a point $p \in b$ covered by $r$ elements of $e_b$ for $r \geq 2$ (meaning that no mutation was done inside $b$). In this case, $\alpha$ supplies the elements of $\{d \in e_b : p \in d \}$ with at least two colors.
\item A mutation was done inside $b$ to a pair $(d_1,d_2) \in e_b$. In this case, after the mutation there exists a point $p \in b$ covered by exactly two elements of $e_b$: $d_1$ and $d_2$. Thus, $d_1,d_2$ are colored by different colors and so $e_b$ is not monochromatic. (Recall that the coloring $\alpha$ was performed after all the mutations).
\item No mutation was done inside $b$, since in the corresponding step, condition 2 was not satisfied due to a pair $(d_1,d_2) \in e_b$. In this case, the argument of case 2 implies that $d_1,d_2$ get different colors.
\end{enumerate}
Thus, $I$ admits a proper coloring with a constant number of colors, and since the same argument can be applied also to any sub-hypergraph of $I$, this completes the proof of Proposition~\ref{pro:appendix}.
%Forgetting now all of the mutations, and looking in the same coloring induced on $\F \setminus B$, the reader can be convinced that any hyperedge $e_{b_i}$ satisfing $|e_{b_i}| \ geq 2$ is not monochromatic. The same consideration can be applied also to any sub-hypergraph of $I$, what completes the proof of the second part of Theorem~\ref{Thm:Pseudo-discs}.
\end{proof}

\section{Tightness of Theorem~\ref{Thm:Pseudo-discs}}
\label{App:Tightness}

Here we complete the proof of Theorem~\ref{Thm:Pseudo-discs} by presenting an explicit example of a graph $G$ that satisfies $\chi_{CF}^{pn}(G) = \Omega(\log n)$ (which shows that the assertion of the theorem is tight).

In~\cite{ELRS} it was shown that for any $n$ there exists a family $\F$ of $n$ unit discs in the plane such that at least $\Omega(\log n)$ colors are required to color the discs of $\F$ in such a way that for any point in the plane, the set of discs that contains it will contain a uniquely colored element. It is clear that the same assertion holds if we construct a set $S$ of points by taking one representative from each cell in the arrangement of $\F$, and weaken the requirement from `for any point' to `for any point in $S$'. It is well known and an easy fact (by Euler's formula) that $|S|$ (which equals to the number of cells in the arrangement of $\F$) is at most quadratic in $n$.

Now, replace each point $x \in S$ with a tiny disc $d_x$ inside the same cell of $x$ that is sufficiently small so that any $D \in \F$ intersects $d_x$ if and only if it intersects $x$. Consider the intersection graph $G$ of the family $\F \cup \{d_x:x \in S\}$ (i.e., all the elements of $\F$ and all the new small discs). In any coloring of $G$, the pointed neighborhood of a disc $d_x$ has a uniquely colored element if and only if the set $\{D \in \F: x \in D\}$ has a uniquely colored element. Hence, any pointed CF-coloring of $G$ provides a coloring of $\F$ such that for any $x \in S$, the set of discs that contains $x$ has a uniquely colored element. By the result of~\cite{ELRS} it follows that any such coloring uses at least $\Omega(\log n)$ colors.

We have thus obtained an intersection graph $G$ of a family of discs such that $|V(G)|=O(n^2)$, while $\chi_{CF}^{pn}(G) = \Omega(\log n) = \Omega(\log |V(G)|)$. This completes the proof of Theorem~\ref{Thm:Pseudo-discs}.

\bibliographystyle{alpha}
\bibliography{references}

\newcommand{\etalchar}[1]{$^{#1}$}
\begin{thebibliography}{AAD{\etalchar{+}}17}

\bibitem[AAD{\etalchar{+}}17]{AADFGHKS17}
Z.~Abel, V.~Alvarez, E.D. Demaine, S.~P. Fekete, A.~Gour, A.~Hesterberg,
  P.~Keldenich, and C.~Scheffer.
\newblock Three colors suffice: Conflict-free coloring of planar graphs.
\newblock In {\em Proc. 28th Annu. ACM-SIAM Sympos. Discrete Algorithms}, pages
  1951--1963, 2017.

\bibitem[AH77a]{AH77a}
K.~Appel and W.~Haken.
\newblock Every planar map is 4-colorable - 1: Discharging.
\newblock {\em Illinois Journal of Mathematics}, (21):421--490, 1977.

\bibitem[AH77b]{AH77b}
K.~Appel and W.~Haken.
\newblock Every planar map is 4-colorable - 2: Reducibility.
\newblock {\em Illinois Journal of Mathematics}, (21):491--567, 1977.

\bibitem[APS08]{APS-union}
P.K. Agarwal, J.~Pach, and M.~Sharir.
\newblock State of the union (of geometric objects).
\newblock In {\em in Proc. Joint Summer Research Conf. on Discrete and
  Computational Geometry: 20 Years Later, Contemp. Math. 452, AMS}, pages
  9--48, 2008.

\bibitem[BPR13]{BPR13}
S.~Buzaglo, R.~Pinchasi, and G.~Rote.
\newblock Topologycal hypergraphs.
\newblock In J{\' a}nos Pach, editor, {\em Thirty Essays on Geometric Graph
  Theory}, pages 71--81. Springer New York, 2013.

\bibitem[Che09]{chei09}
P.~Cheilaris.
\newblock {\em Conflict-free coloring}.
\newblock PhD thesis, City University of New York, 2009.

\bibitem[CSS11]{CSS11}
P.~Cheilaris, S.~Smorodinsky, and M.~Sulovsk{\'{y}}.
\newblock The potential to improve the choice: list conflict-free coloring for
  geometric hypergraphs.
\newblock In {\em Proc. 27th Annu. ACM Sympos. Comput. Geom.}, pages 424--432,
  2011.

\bibitem[ELRS03]{ELRS}
G.~Even, Z.~Lotker, D.~Ron, and S.~Smorodinsky.
\newblock Conflict-free colorings of simple geometric regions with applications
  to frequency assignment in cellular networks.
\newblock {\em Siam. J. Comput.}, 33:94--136, 2003.

\bibitem[FPS17]{FPS17}
J.~Fox, J.~Pach, and A.~Suk.
\newblock Erdos-{H}ajnal conjecture for graphs with bounded {VC}-dimension.
\newblock In {\em Proc. 33rd Annu. ACM Sympos. Comput. Geom.}, pages
  43:1--43:15, 2017.

\bibitem[GST14]{GST14}
R.~Glebov, T.~Szab{\'{o}}, and G.~Tardos.
\newblock Conflict-free colouring of graphs.
\newblock {\em Combinatorics, Probability {\&} Computing}, 23(3):434--448,
  2014.

\bibitem[Han34]{CH34}
Ch. Chojnacki~(A. Hanani).
\newblock {\"Uber} wesentlich unpl{\"a}ttbare kurven im dreidimensionalen
  raume.
\newblock {\em Fund. Math.}, 23:135--142, 1934.

\bibitem[KLPS86]{KLPS}
K.~Kedem, R.~Livne, J.~Pach, and M.~Sharir.
\newblock On the union of {Jordan} regions and collision-free translational
  motion amidst polygonal obstacles.
\newblock {\em Discrete {\&} Computational Geometry}, 1:59--71, 1986.

\bibitem[Mat02]{MATOUSEK}
J.~Matou{\v s}ek.
\newblock {\em Lectures on Discrete Geometry}.
\newblock Springer-Verlag New York, Inc., Secaucus, NJ, USA, 2002.

\bibitem[Pin14]{Pin14}
R.~Pinchasi.
\newblock A finite family of pseudodiscs must include a "small" pseudodisc.
\newblock {\em Siam. J. Discrete Mathematics}, 28(4):1930--1934, 2014.

\bibitem[PT03]{cf1}
J.~Pach and G.~T{\' o}th.
\newblock Conflict free colorings.
\newblock {\em Discrete {\&} Computational Geometry, The Goodman-Pollack
  {Festschrift}}, pages 665--671, 2003.

\bibitem[PT09]{CFPT09}
J.~Pach and G.~Tardos.
\newblock Conflict-free colourings of graphs and hypergraphs.
\newblock {\em Combinatorics, Probability {\&} Computing}, 18(5):819--834,
  2009.

\bibitem[Smo07]{smoro}
S.~Smorodinsky.
\newblock On the chromatic number of some geometric hypergraphs.
\newblock {\em Siam. J. Discrete Mathematics}, 21:676--687, 2007.

\bibitem[Smo13]{CF-survey}
S.~Smorodinsky.
\newblock {\em Conflict-Free Coloring and its Applications, Geometry ---
  Intuitive, Discrete, and Convex}, pages 331--389.
\newblock Springer Berlin Heidelberg, Berlin, Heidelberg, 2013.

\bibitem[Tut70]{Tutte70}
W.T. Tutte.
\newblock Toward a theory of crossing numbers.
\newblock {\em J. Combinat. Theory}, 8:45--53, 1970.

\bibitem[WZ90]{k-admissible}
S.~Whitesides and R.~Zhao.
\newblock K-admissible collections of jordan curves and offsets of circular arc
  figures.
\newblock Technical report socs 90.08, McGill University, Montreal, 1990.

\end{thebibliography}

\end{document}